\documentclass[12pt]{amsart}


\usepackage[top=1in, bottom=1in, left=1in, right=1in]{geometry}
\usepackage{times}

\usepackage{amssymb,amsmath,amsthm,mathtools,mathrsfs}
\usepackage{enumitem}

%

\setlist[1]{itemsep=0.5em, topsep=0.5em}

\usepackage[table,dvipsnames]{xcolor}
\usepackage{color}
\definecolor{red}{rgb}{1,0,0}
\definecolor{orange}{rgb}{0.7,0.3,0}
\definecolor{blue}{rgb}{0,.3,.7}
\definecolor{green}{rgb}{0,.6,.4}

\PassOptionsToPackage{hyphens}{url}\usepackage[colorlinks=true, linkcolor=NavyBlue, citecolor=teal, urlcolor=gray]{hyperref}   

\urlstyle{rm}

\renewcommand{\le}{\leqslant}
\renewcommand{\leq}{\leqslant}
\renewcommand{\ge}{\geqslant}
\renewcommand{\geq}{\geqslant}{}



\numberwithin{equation}{section}





\theoremstyle{plain}
\newtheorem{theorem}{Theorem}
\newtheorem{cor}{Corollary}[section]
\newtheorem{corollary}[cor]{Corollary}

\newtheorem{lemma}[cor]{Lemma}

\newtheorem{proposition}[cor]{Proposition}

\theoremstyle{remark}
\newtheorem{rem}{Remark}
\newtheorem*{rem*}{Remark}

\theoremstyle{definition}



\newcommand{\N}{\mathbb{N}}
\newcommand{\Z}{\mathbb{Z}}

\newcommand{\R}{\mathbb{R}}

\newcommand{\CE}{\mathcal{E}}

\newcommand{\CS}{\mathcal{S}}


\newcommand{\nn}{\nonumber \\}

\newcommand{\dee}{\,\mathrm{d}}

\newcommand{\Log}{\operatorname{Log}}

\newcommand{\fl}[1]{\left\lfloor#1\right\rfloor}

\newcommand{\eps}{\varepsilon}
\renewcommand{\phi}{\varphi}
\renewcommand{\rho}{\varrho}

\newcommand{\bs}\boldsymbol{}

\renewcommand{\mod}[1]{\,(\mathrm{mod}\,#1)}

\newcommand{\bg}{\big}
\newcommand{\bgg}{\Big}
\newcommand{\bggg}{\bigg}

\definecolor{red}{rgb}{1,0,0}

\definecolor{orange}{rgb}{0.7,0.3,0}

\definecolor{blue}{rgb}{.2,.6,.75}

\definecolor{green}{rgb}{.4,.7,.4}


\begin{document}

\title{An upper bound on the mean value of the Erd\H os--Hooley Delta function}

\author{Dimitris Koukoulopoulos}
\address{D\'epartement de math\'ematiques et de statistique\\
Universit\'e de Montr\'eal\\
CP 6128 succ. Centre-Ville\\
Montr\'eal, QC H3C 3J7\\
Canada}
\email{{\tt dimitris.koukoulopoulos@umontreal.ca}}

\author{Terence Tao}
\address{Department of Mathematics\\
UCLA \\
405 Hilgard Ave\\
Los Angeles, CA 90095\\
USA}
\email{{\tt tao@math.ucla.edu}}

\subjclass[2020]{Primary: 11N25; Secondary: 11N37, 11N64}
\keywords{Erd\H os--Hooley Delta function, divisors of integers, concentration function, method of moments}

\date{\today}

\begin{abstract}
The Erd\H os--Hooley Delta function is defined for $n\in\mathbb{N}$ as  $\Delta(n)=\sup_{u\in\mathbb{R}} \#\{d|n : e^u<d\le e^{u+1}\}$. We prove that $\sum_{n\le x} \Delta(n) \ll x(\log\log x)^{11/4}$ for all $x\ge100$. This improves on earlier work of Hooley, Hall--Tenenbaum and La Bretèche--Tenenbaum.
\end{abstract}

\maketitle

\section{Introduction}

The \emph{Erd\H os--Hooley Delta function} (\href{https://oeis.org/A226898}{oeis.org/A226898}) is defined for a natural number $n$ as
\[
\Delta(n) \coloneqq \sup_{u \in \R} \#\{d|n: e^u<d\le e^{u+1}\} .
\]
Erd\H os introduced this function in the 1970s \cite{erdos1,erdos2} and studied certain aspects of its distribution in joint work with Nicolas \cite{erdos-nicolas,erdos-nicolas2}. However, it was not until the work of Hooley in 1979 that $\Delta$ was studied in more detail \cite{hooley}. Specifically, Hooley proved that
\begin{equation}
	\label{eq:hooley's estimate}
	\sum_{n\le x} \Delta(n) \ll x (\Log x)^{\frac{4}{\pi}-1} 
\end{equation}
for any $x\ge1$. Here and in the sequel we use the notation
\[
\Log x \coloneqq \max\{1, \log x\} \quad\text{for}\ x>0,
\]
and also define
\[
\Log_2 x \coloneqq \Log(\Log x)
\quad\text{and}\quad 
\Log_3 x \coloneqq \Log(\Log_2 x);
\]
see also Section \ref{notation-sec} below for our asymptotic notation conventions.

To put Hooley's estimate \eqref{eq:hooley's estimate}  into context, let us note that $1\le \Delta\le \tau$ with $\tau(n)=\#\{d|n\}$ the divisor function. Thus we have the trivial bounds
\begin{equation}
	\label{eq:hooley-trivial}
	x\ll \sum_{n\le x} \Delta(n) \ll x\Log x 
\end{equation}
for $x\ge1$. Comparing \eqref{eq:hooley's estimate} with \eqref{eq:hooley-trivial}, we see that $\Delta$ is on average of genuinely smaller order than $\tau$. This savings is crucial: as Hooley demonstrated (see \cite{hooley,tenenbaum-delta-applications}, and Remarks \ref{rem:diophantine} and \ref{rem-hooley} below), it can be exploited to count solutions to certain Diophantine equations that are not amenable to more ``standard'' techniques, as well as to improve bounds on certain Diophantine approximation results.

In a series of papers, Hall and Tenenbaum improved significantly Hooley's estimate for $\Delta$ and for various generalizations of it; see \cite{HT1,HT2,HT3}, and also \cite{HT-book}. Their work culminated in the following estimates \cite[Theorems 60 and 70]{HT-book}: for every fixed $\eps>0$ and for every $x\ge1$, we have
\begin{equation}
	\label{eq:HT}
	x\Log_2 x\ll \sum_{n\le x}\Delta(n) \ll_\eps x \exp\bgg( \bg(\sqrt{2}+\eps \bg)\sqrt{\Log_2x\Log_3x} \, \bgg) .
\end{equation}
The upper bound was improved recently by La Bret\`eche and Tenenbaum \cite{breteche} to
\[
\sum_{n \leq x} \Delta(n) \ll_\eps x \exp\bgg( \bg(\sqrt{2}\log 2+\eps \bg) \sqrt{\Log_2 x}\, \bgg)   
\]
for every fixed $\eps>0$ and for every $x\ge1$.

The main result of this note is the following further sharpening of the upper bound.

\begin{theorem}[Mean value bound]\label{main}  For $x\ge1$, we have
\[
\sum_{n \le x} \Delta(n) \ll x (\Log_2 x)^{11/4} .
\]
\end{theorem}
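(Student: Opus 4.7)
The plan is to apply the method of moments: pick a parameter $q=q(x)$ tending to infinity and bound $\sum_{n\le x}\Delta(n)^q$, then extract the mean estimate via H\"older's inequality
\[
\sum_{n\le x}\Delta(n) \le x^{1-1/q}\bg(\sum_{n\le x}\Delta(n)^q\bg)^{1/q}.
\]
If one can establish $\sum_{n\le x}\Delta(n)^q\ll x(\Log_2 x)^{\alpha q}$ uniformly for $q$ up to some $q_{\max}$, then optimizing yields $\sum_{n\le x}\Delta(n)\ll x(\Log_2 x)^\alpha$. The target exponent $\alpha=11/4$ is rational but non-integer, strongly suggesting that it will emerge from an optimization over real parameters rather than from a single clean combinatorial identity.

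I would begin by reducing to squarefree $n\le x$ with $\omega(n)=k\asymp \Log_2 x$; the remaining $n$ contribute negligibly thanks to $\Delta\le\tau$ and standard Hardy--Ramanujan deviation estimates for $\omega$. For such $n=p_1\cdots p_k$, divisors correspond to subsets $S\subseteq[k]$ via $d_S=\prod_{i\in S}p_i$, so
\[
\Delta(n) = \sup_{u\in\R}\#\bg\{S\subseteq[k]: \sum_{i\in S}\log p_i\in(u,u+1]\bg\}.
\]
The sup over real $u$ is reduced at cost $O(1)$ to a discrete sup over $u\in\Z\cap[0,\log x]$. For fixed $u$, setting $f_n(u)=\#\{d|n: e^u<d\le e^{u+1}\}$ and expanding the $q$-th power,
\[
\sum_{n\le x}f_n(u)^q = \sum_{\substack{d_1,\dots,d_q\\ \log d_i\in(u,u+1]}}\fl{\tfrac{x}{\lcm(d_1,\dots,d_q)}}.
\]
The remaining task is to bound this $\lcm$-weighted tuple count by $x(\Log_2 x)^{\alpha q}$ uniformly in $u$.

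The main obstacle is this tuple count, which is highly sensitive to the multiplicative overlap among the $d_i$. Rather than a crude tuple-by-tuple union bound, one must parametrize tuples by their joint prime-factorization anatomy---how much the $d_i$ share, and how their differences distribute among small, medium and large primes---and exploit the rigid co-localization constraint $|\log d_i-\log d_j|\le 1$. Such parametrizations are in the spirit of Ford--Tenenbaum anatomical estimates. A union bound over the $O(\Log x)$ net points of $u$ is rendered harmless by choosing $q\asymp \Log_2 x$, since $(\Log x)^{1/q}=O(1)$. Pinpointing the constant $11/4$---rather than a weaker value like $3$---is the chief technical delicacy, and it is here that a careful multi-scale analysis (perhaps a chaining argument within the unit window itself, using dyadic decompositions at scales $2^{-j}$) is likely to play a decisive role, matched against a sharp bound on the corresponding mixed moments of prime-power divisor counts.
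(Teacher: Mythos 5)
Your Hölder plan cannot yield the theorem, and the obstruction is structural rather than technical. The distribution of $\Delta$ has a polynomially heavy tail: by the lower-bound results cited in Remark~1 (from [FKT]), for any $y$ with $\Log_2 y\in[\eps\Log_2 x,(1-\eps)\Log_2 x]$ there are $\gtrsim x(\Log y)^{-(\log 4-1)}$ integers $n\le x$ with $\Delta(n)\gtrsim(\Log y)^{\log 4-1}$. Taking $\Log_2 y=(1-\eps)\Log_2 x$ gives
\[
\sum_{n\le x}\Delta(n)^q \gtrsim x\,(\Log x)^{(1-\eps)(q-1)(\log 4-1)}
\]
for every $q\ge 1$, so the moment bound $\sum_{n\le x}\Delta(n)^q\ll x(\Log_2 x)^{\alpha q}$ you need is false for every $\alpha$. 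Plugging the true moment growth into your Hölder step gives at best $\sum_{n\le x}\Delta(n)\ll x(\Log x)^{\log 4-1+o(1)}$, which is worse even than Hooley's original bound \eqref{eq:hooley's estimate}. The same heavy tail also undermines your opening reduction: the atypical $n$ you propose to discard via Hardy--Ramanujan are exactly the ones that dominate the moments.

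The paper's route is genuinely different precisely because it never bounds $\sum_n\Delta(n)^q$. It passes to the logarithmically weighted sum $\sum_{n\in\CS_{<x}}\Delta(n)/n$ via \eqref{eq:switching to log weights}, and proves a \emph{weak-type} tail estimate $\sum_{n\in\CS_{<x},\,\Delta(n)\ge\lambda\Log_2 x}n^{-1}\ll \lambda^{-1}(\Log\lambda)^{3/4}\Log x$; the mean is then recovered by dyadic summation over the $O(\Log_2 x)$ scales of $\Delta$, which tolerates the $1/\lambda$ density exactly, giving $11/4 = 1 + \sum_{j\le\Log_2 x} j^{3/4}$-exponent, with no loss at the top of the range. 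The tail bound itself is not obtained from a raw moment but from a recursion \emph{conditioned on a nested hierarchy of level sets} $\CS^{q,A}_{<x}$ where the lower localized moments $M_j(n)=\int\Delta(n;u)^j\,\dee u$ are already controlled (Proposition~\ref{iterative}): this conditioning is precisely what prevents the heavy tail from dominating, and it leans on the multiplicative recursion \eqref{mq0}, the symmetry observation \eqref{eq:symmetry}, and the Gaussian concentration of Proposition~\ref{gauss}. None of these ingredients appear in your sketch, and the $\lcm$-weighted tuple count you reduce to is exactly the hard problem that this machinery is designed to sidestep; naming it and invoking ``anatomical estimates'' does not close the gap.
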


\begin{rem}
The average value of $\Delta$ is dominated by ``atypical'' integers. Indeed, we know from results in \cite{breteche} and in \cite{fgk} that, for every fixed $\eps>0$, we have
\[
(\Log_2 x)^{\eta - \eps} \le \Delta(n) \le (\Log_2 x)^{\theta + \eps}
\]
for all but $o(x)$ integers $n\in[1,x]$, where $\theta\coloneqq \frac{\log2}{\log2+1/\log2-1}=0.6102\dots$ and $\eta=0.3533\dots$ is another constant\footnote{The precise definition is $\eta=(\log2)/\log(2/\rho)$, where $\rho$ is the unique number in $[0,1/3]$ satisfying the equation $1 - \rho/2 = \lim_{j\to\infty} 2^{j-2}/\log a_j$	with $a_1 = 2$, $a_2 = 2 + 2^{\rho}$ and $a_j = a_{j-1}^2 + a_{j-1}^{\rho} - a_{j-2}^{2\rho}$ for $j\in\Z_{\ge3}$.}. 
However, the leftmost inequality in \eqref{eq:HT} implies that the mean value of $\Delta(n)$ over $n\in[1,x]$ is of larger order. As a matter of fact, it appears that the average value has significant contributions from integers for which $\Delta(n)$ is as large as $(\log x)^{\log4-1}$. Indeed, in a recent preprint of Kevin Ford and the two authors of the present paper \cite{FKT}, it was shown that 
\[
\sum_{n \leq x} \Delta(n) \gg x(\Log_2x)^{1+\eta},
\]
with $\eta$ as above. Ignoring factors of $(\Log_2x)^{O(1)}$, this paper shows roughly that for any choice of  $\Log_2y\in[\eps\Log_2x,(1-\eps)\Log_2x]$, we have $\Delta(n) \gtrapprox (\Log y)^{\log4-1}$ for $\gtrapprox x/(\Log y)^{\log4-1}$ integers $n\le x$ with $\omega(n)=\Log_2y+\Log_2x+O(1)$ (those that have about $2\Log_2y$ prime factors $\le y$, and about $\Log_2x-\Log_2y$ prime factors in $(y,x]$).
\end{rem}

\begin{rem}\label{rem:diophantine}
As indicated before, estimates on the partial sums of the $\Delta$-function have applications to counting solutions to certain Diophantine equations. In \cite{robert}, Olivier Robert studied the following question: given integers $k\ge2$, $\ell_k\ge\cdots \ge\ell_1\ge2$ and $c_0,c_1,\dots,c_k\ge1$ such that $\sum_{j=1}^k 1/\ell_j=1/2$, let $S^{\neq}(x)$ denote the number of tuples $(m_0,m_1,\dots,m_k,n_0,n_1,\dots,n_k)\in\N^{2k+2}$ such that
\begin{equation}
	\label{eq:robert diophantine equation}
		c_0m_0^2+ \sum_{j=1}^k c_j m_j^{\ell_j} = c_0n_0^2+ \sum_{j=1}^k c_j n_j^{\ell_j} \le x.
\end{equation}
A straightforward adaptation of \cite{robert} leads to the estimate
	\begin{equation}
		\label{eq:robert improvement}	
			S^{\neq} (x) \ll x(\Log_2x)^{2^{4L}+15/4}
	\end{equation}
with $L=\max\{\ell_1,\dots,\ell_k\}$ and the implied constant depending at most on parameters $k$, $c_1,\dots,c_k$ and $\ell_1,\dots,\ell_k$, which improves Theorem 1.1 of \cite{robert}. In turn, this leads to a similar improvement of Theorem 1.2 of \cite{robert}. We will outline the proof of \eqref{eq:robert improvement} in Section \ref{sec:robert improvement}. 
\end{rem}

\begin{rem}\label{rem:erdos}
	Theorem \ref{main} has applications to a problem of Erd\H os on sets whose subset sums are not squares. Specifically, assume that $c$ is a constant such that
	\begin{equation}
			\label{eq:Delta bound assumption}
		\sum_{n\le x}\Delta(n) \ll x(\Log_2x)^c 
		\quad\text{for all}\ x\ge1. 		
	\end{equation}
	In an upcoming paper, David Conlon, Jacob Fox and Huy Pham develop a new combinatorial argument that deduces from \eqref{eq:Delta bound assumption} that any subset $A$ of $\{1,2,\dots,N\}$ with $|A| \geq N^{1/3} (\Log_2 N)^{c'}$ for some appropriate $c'=c'(c)$ has the property that its set of subset sums $\{\sum_{b\in B}b: B\subseteq A\}$ contains a square. This improves the earlier bound of $N^{1/3} (\Log N)^C$ with $C>0$ of Nguyen and Vu \cite{nguyen-vu}.
\end{rem}

\begin{rem}\label{rem-hooley}  In \cite{hooley}, Hooley used the bound \eqref{eq:hooley's estimate} to show that for any irrational $\theta$ and real $\gamma$, and any $\eps>0$, the inequality $\| n^2 \theta - \gamma \| \leq n^{-1/2}( \log n)^{\frac{2}{\pi}-\frac{1}{2}+\eps}$ holds for infinitely many $n$, where $\|x\|$ denotes the distance of the real number $x$ from the nearest integer. Tenenbaum \cite{tenenbaum-delta-applications} improved the logarithmic factor in this bound using \eqref{eq:HT}. Similarly, it should be possible to use Theorem \ref{main} to improve further the logarithmic factor, but we will not pursue this matter here.  In the homogeneous case $\gamma=0$, the more significant improvement $\| n^2 \theta \| \leq n^{-2/3+\eps}$ was achieved (for arbitrary real $\theta$) by Zaharescu \cite{zah}.
\end{rem}

\subsection*{Acknowledgments} 
The authors would like to thank Huy Pham for bringing to their attention the connection mentioned in Remark \ref{rem:erdos} above. They would also like to thank Kevin Ford, Olivier Robert, Alexandru Zaharescu, and an anonymous referee for their careful reading and useful remarks on earlier versions of the paper.

DK is supported by the Courtois Chair II in fundamental research, by the Natural Sciences and Engineering Research Council of Canada (RGPIN-2018-05699) and by the Fonds de recherche du Qu\'ebec - Nature et technologies (2022-PR-300951).

TT is supported by the National Science Foundation grant DMS-1764034 and by a Simons Investigator Award.

\smallskip

DK dedicates this paper to his son Paris Christopher, who rested in his arms as a newborn many sleepless nights during the writing of the paper.

\section{Notation}\label{notation-sec}

We use $X \ll Y$, $Y \gg X$, or $X = O(Y)$ to denote a bound of the form $|X| \leq CY$ for a constant $C$.  If we need this constant to depend on parameters, we indicate this by subscripts, for instance $X \ll_k Y$ denotes a bound of the form $|X| \leq C_k Y$ where $C_k$ can depend on $k$.  We also write $X \asymp Y$ for $X \ll Y \ll X$.  All sums will be over natural numbers unless the variable is $p$, in which case the sum will be over primes. We use $1_E$ to denote the indicator of a statement $E$, thus $1_E$ equals $1$ when $E$ is true and $0$ otherwise.

Given an integer $n$, we write $\tau(n)\coloneqq\sum_{d|n}1$ for its divisor-function and $\omega(n)\coloneqq\sum_{p|n}1$ for the number of its distinct prime factors.

It will be convenient, for each $x \geq 1$, to work with the set $\CS_{<x}$ denote the set of square-free numbers, all of whose prime factors $p$ are such that $p<x$.  Observe that if $1 \leq y \leq x$, then every $n \in \CS_{<x}$ has a unique factorization $n = n_{<y} n_{\geq y}$, where $n_{<y} \in \CS_{<y}$ and $n_{\geq y}$ lies in the set $\CS_{[y,x)}$ of square-free numbers, all of whose prime factors $p$ are in the interval $[y,x)$.

\section{Methods of proof}

Similarly to other authors, we shall work with logarithmic weights. Specifically, for all $x\ge1$, we have \cite[Theorem 61]{HT-book}
\begin{equation}
	\label{eq:switching to log weights}
	\sum_{n\le x} \Delta(n) \ll \frac{x}{\Log x} \sum_{n\in\CS_{<x}} \frac{\Delta(n)}{n} .	
\end{equation}

Now, for each $u\in\R$, let us define
\begin{equation}\label{deltanu-def}
	\Delta(n; u) \coloneqq \#\{d|n: e^u < d \leq e^{u+1} \} ,
\end{equation}
so that 
\[
\Delta(n)=\sup_{u\in\R} \Delta(n;u).
\]
As with previous work, we introduce the moments
\begin{equation}\label{Mq-def}
 M_q(n) \coloneqq \int_\R \Delta(n;u)^q \dee u
\end{equation}
for $q \geq 1$. Thus, for instance,
\[
M_1(n) = \tau(n)
\]
and
\begin{equation}\label{deltan}
 \Delta(n) = \lim_{q \to \infty} M_q(n)^{1/q}.
\end{equation}
In view of \eqref{deltan}, it is then natural to try to control $M_q(n)$ for large $q$, keeping track of the dependence of constants on $q$. In order to exploit the multiplicative nature of $\Delta$, we employ the identity
\[
\Delta(np;u) = \Delta(n;u) + \Delta(n; u-\log p)
\]
whenever $n$ is a natural number, $p$ is a prime not dividing $n$, and $u$ is a real. Taking the $q^{\mathrm{th}}$ moments of both sides of this identity, we obtain
\[
M_q(pn) = \sum_{\substack{a+b=q \\ 0 \leq b \leq q}} \binom{q}{a} \int_\R \Delta(n;u)^a \Delta(n;u-\log p)^b\dee u.
\]
Extracting out the extreme terms with $b\in\{0,q\}$, we can write this as
\begin{equation}\label{mq0}
 M_q(pn) = 2M_q(n) + \sum_{\substack{a+b=q \\ 1 \leq b \leq q-1}} \binom{q}{a} \int_\R \Delta(n;u)^a \Delta(n;u-\log p)^b\dee u.
\end{equation}
By the use of H\"older's inequality and other tools, one can use this identity to recursively control expressions such as
\[
\sum_{\substack{ n \ge1 \\ \omega(n) \geq k}} 
	\frac{M_{q}(n)^{1/q}}{n^\sigma}
\]
for various $\sigma>1$ and $k \geq 1$, where $\omega(n)$ denotes the number of distinct prime factors of $n$.  See for instance \cite{breteche} for an example of this approach.

\medskip

In our work, we use a variation of the above ideas. Our main guiding heuristic is that $\Delta(n)$ behaves roughly as
\begin{equation}
	\label{eq:delta-heuristic-size}
	\max_{y\in[1,x]} \frac{\tau(n_{<y})}{\Log y}
\end{equation}
for integers $n\in[1,x]$. To give some support to this heuristic, let us note that 
\[
\tau(a) = M_1(a) = \int_{-1}^{\log a} \Delta(a;u) \dee u \le (1+\log a) \Delta(a) 
\]
for any $a\in\N$. Applying this with $a=n_{<y}$ and noticing that $\Delta(n_{<y})\le \Delta(n)$ and that $\log n_{<y}$ is typically of size $\Log y$, we find that the expression in \eqref{eq:delta-heuristic-size} is morally a lower bound (up to constants) for $\Delta(n)$. 

Motivated by the discussion of the above paragraph, we introduce certain sets that are meant to act roughly as level sets of the $\Delta$-function. Precisely, given a parameter $A\ge1$, we define $\tilde{\CS}^A_{<x}$ to be the set  of integers $n\in\CS_{<x}$ such that
\[
\tau(n_{<y}) \le A \Log y \qquad\text{for all}\ y\in[1,x].
\]
Using a simple Markov inequality, we may show that a proportion of $1-O(1/A)$ integers in $\CS_{<x}$ lie also in $\tilde{\CS}_{<x}^A$. As a matter of fact, using a more careful analysis, the same statement holds if we replace $\tilde{\CS}_{<x}^A$ by the set $\CS_{<x}^A$ of integers $n\in\CS_{<x}$ such that
\begin{equation}
		\label{eq:def of S1}
			\tau(n_{<y}) \le A e^{-f_A(y)} \Log y  \qquad\text{for all}\ y\in[1,x],	
\end{equation}
where $e^{-f_A(y)}$ is a Gaussian-type weight concentrated around the region 
\[
\Log_2 y = \frac{\Log A + O\big(\sqrt{\log A}\,\big)}{\log 4 - 1}
\] 
(cf.~Proposition \ref{gauss}). 

Our goal would then be to also show that $\Delta(n)\lessapprox A$ for most $n\in \CS^A_{<x}$. (In fact, we will only be able to show a weaker version of this, which is why the exponent in Theorem \ref{main} is larger than in the lower bound of \eqref{eq:HT}.) In order to achieve this goal, we use \eqref{mq0} and a recursive argument that allows us to control averages of $M_q(n)$ when $n$ ranges over $\CS_{<x}^{q-1,A}$, defined to be the set of $n\in\CS_{<x}^A$ such that 
\begin{equation}
	\label{eq:def of S^(q-1)}
	M_j(n)\le \tau(n)\cdot m_{j,A} \quad\text{for}\ j=2,3,\dots,q-1,
\end{equation}
where the $m_{j,A}$'s are certain suitable quantities growing roughly like $(jA)^j (\log A)^{3j/4}$.

It is important to note that our recursive argument makes use of the following simple but crucial observation: the integral 
\begin{equation}
	\label{eq:symmetry}	
	\binom{q}{a} \int_\R \Delta(n;u)^a \Delta(n;u-\log p)^b\dee u
\end{equation}
is \emph{symmetric} in $a,b$. Indeed, we have $\Delta(n;v) = \Delta(n;\log n-v-1)$ for all but finitely many values of $v\in\R$, because $d\in(e^v,e^{v+1}]$ if and only if $n/d\in[e^{\log n-v-1},e^{\log n-v})$. Thus
\begin{align*}
		\int_\R \Delta(n;u)^a \Delta(n;u-\log p)^b\dee u
		&= \int_\R \Delta(n;\log n-u-1)^a \Delta(n;\log n - u  -1 + \log p  )^b\dee u\\
		&=\int_\R \Delta(n;v-\log p)^a \Delta(n;v)^b\dee v.
\end{align*}
This proves our claim that the integral in \eqref{eq:symmetry} is symmetric in $a,b$.

Now, combining \eqref{mq0} with the symmetry of \eqref{eq:symmetry}, we have the inequality
\begin{equation}\label{mq}
 M_q(pn) \leq 2M_q(n) + 2 \sum_{\substack{a+b=q \\ 1 \leq b \leq q/2}} \binom{q}{a} \int_\R \Delta(n;u)^a \Delta(n;u-\log p)^b\ du.
\end{equation}
To eliminate the factors of $2$ we observe that $\tau(pn)=2\tau(n)$ (recall that $p\nmid n$ here), and hence
\begin{equation}\label{mq-2}
 \frac{M_q(pn)}{\tau(pn)} \leq \frac{M_q(n)}{\tau(n)} + \frac{1}{\tau(n)} \sum_{\substack{a+b=q \\ 1 \leq b \leq q/2}} \binom{q}{a} \int_\R \Delta(n;u)^a \Delta(n;u-\log p)^b\ du.
\end{equation}
We then can apply H\"older's inequality (treating the $\Delta(n;u)^a$ and $\Delta(n;u-\log p)^b$ terms differently) to \eqref{mq-2}, and use our pointwise bounds \eqref{eq:def of S1} and \eqref{eq:def of S^(q-1)},  which will allow us to inductively obtain efficient estimates for the sum
\[
\sum_{n \in \CS_{<x}^{q-1,A}} \frac{M_q(n)/\tau(n)}{n},
\]
where $q \geq 1$, $A \geq 1$, $x\geq 1$ are parameters.

\section{Basic estimates}\label{basic-estimates-sec}

We record here a couple of simple lemmas for easy reference, starting with the following standard consequence of Mertens' theorem:

\begin{lemma}[Mertens' theorem estimate]\label{mertens}  Fix $k\ge0$. For $x\ge y\ge 1$, we have
\[
\sum_{n \in \CS_{[y,x)} } \frac{\tau^k(n)}{n} = \prod_{y\le p<x} \left(1 + \frac{2^k}{p}\right) \asymp_k \bggg(\frac{\Log x}{\Log y}\bggg)^{2^k} .
\]
\end{lemma}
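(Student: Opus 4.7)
My plan is to verify both the identity and the estimate separately; neither step presents a genuine obstacle, so this amounts mostly to a careful book-keeping exercise.

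For the identity, since every $n\in\CS_{[y,x)}$ is squarefree, we have $\tau(n)=2^{\omega(n)}$ and hence $\tau(n)^k = (2^k)^{\omega(n)}$ is completely multiplicative on $\CS_{[y,x)}$. Because a squarefree integer with prime factors in $[y,x)$ is uniquely a product of distinct primes from $[y,x)$, the sum factors as an Euler product
\[
\sum_{n\in\CS_{[y,x)}}\frac{\tau(n)^k}{n} \;=\; \prod_{y\le p<x}\left(1+\frac{2^k}{p}\right).
\]

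For the asymptotic, I would take logarithms. Using $\log(1+t)=t+O(t^2)$ for $|t|\le 2^k$, together with the convergence of $\sum_p 1/p^2$, we obtain
\[
\log\prod_{y\le p<x}\left(1+\frac{2^k}{p}\right) \;=\; 2^k \sum_{y\le p<x}\frac{1}{p} + O_k(1).
\]
Now I apply Mertens' theorem in the form $\sum_{p\le t} 1/p = \log\log t + M + O(1/\log t)$ (valid for $t\ge 2$). Assuming first that $y\ge 2$ and $x\ge 2$, subtraction gives
\[
\sum_{y\le p<x}\frac{1}{p} \;=\; \log\frac{\Log x}{\Log y} + O(1),
\]
since $\Log t=\log t$ for $t\ge e$ (and differences for $t\in[2,e]$ are absorbed into the $O(1)$ error). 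Exponentiating yields the claimed bound $\asymp_k (\Log x/\Log y)^{2^k}$.

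The only remaining point is to dispose of the degenerate range $y<2$ or $x<2$. If $x\le 2$ the product is empty (or contains only $p=2$), so both sides are $\asymp_k 1$, and the convention $\Log x=\max(1,\log x)$ makes $\Log x/\Log y \asymp 1$. If $y<2\le x$, then $\Log y$ is bounded above and below by constants, so $\Log x/\Log y \asymp \Log x$, while $\sum_{p<x} 1/p = \log\Log x + O(1)$ by Mertens again, giving the same conclusion. In every case the constants depend only on $k$, completing the proof.
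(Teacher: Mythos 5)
Your proof is correct and follows essentially the same route as the paper: factor the sum as an Euler product over $[y,x)$, take logarithms using $\log(1+t)=t+O(t^2)$, and apply Mertens' estimate for $\sum_{p\le t}1/p$. The paper's proof is terser (it leaves the Euler-product step and the boundary cases implicit), but the substance is identical.
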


\begin{proof} We have
\[
\log \prod_{y\le p<x} \left(1 + \frac{2^k}{p}\right) = \sum_{y\le p<x} \frac{2^k}{p} + O_k(1) ,
\]
so the lemma follows by a classical estimate of Mertens \cite[Theorem 3.4(b)]{dk-book}.
\end{proof}

We also note the following estimate:

\begin{lemma}[Brun--Titchmarsh inequality]\label{bt}  For $z\ge y \geq z/100\ge1$, we have
\[
\sum_{y\le p\le z} \frac{1}{p} \ll \frac{\log(z/y)}{\log y} + \frac{1}{y^{1/2}}.
\]
\end{lemma}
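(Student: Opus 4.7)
This is a short classical estimate, so the plan is to bound the sum crudely by $(\pi(z)-\pi(y))/y$ and split into two regimes according to the length of the interval $[y,z]$, using the trivial count in one regime and the true Brun--Titchmarsh inequality in the other. Throughout, the hypothesis $y\geq z/100$ will let us replace various quantities involving $z$ by analogous ones in $y$, up to absolute constants. In particular, since $1\le z/y\le 100$, one has $\log(z/y)\asymp (z-y)/y$.

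I would split based on whether $z-y\ge \sqrt{z}$ or not. In the \emph{short} case $z-y<\sqrt{z}$, I would just note $\pi(z)-\pi(y)\le z-y+1\ll \sqrt{z}\ll\sqrt{y}$, so that
\[
\sum_{y\le p\le z}\frac{1}{p}\le \frac{\pi(z)-\pi(y)}{y}\ll \frac{1}{\sqrt{y}},
\]
which is within the claimed bound. In the \emph{long} case $z-y\ge\sqrt{z}$, I would invoke the Brun--Titchmarsh inequality $\pi(z)-\pi(y)\ll (z-y)/\log(z-y)$; since now $\log(z-y)\ge\tfrac12\log z\asymp\log y$, this gives $\pi(z)-\pi(y)\ll (z-y)/\log y$, so
\[
\sum_{y\le p\le z}\frac{1}{p}\le \frac{\pi(z)-\pi(y)}{y}\ll \frac{z-y}{y\log y}\asymp \frac{\log(z/y)}{\log y},
\]
again within the claimed bound.

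There is essentially no obstacle; the only mild care needed is in the ``long'' regime, where one must check that $z-y$ is large enough for the Brun--Titchmarsh bound to be nontrivial (i.e.\ that $\log(z-y)\gg\log y$), which is exactly what the threshold $z-y\ge\sqrt{z}$ delivers once we also use $y\asymp z$. For very small values of $z$ the inequality is trivial since the right-hand side is bounded below by an absolute constant, so no separate treatment of small $z$ is needed beyond observing this.
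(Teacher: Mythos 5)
Your argument is correct and follows essentially the same route as the paper's own proof: reduce the sum to a prime count via $1/p\asymp 1/y$, then split at a threshold of size about $\sqrt{y}\asymp\sqrt{z}$, using the trivial count in the short case and the Brun--Titchmarsh inequality in the long case. The paper splits at $z-y\le y^{1/2}$ rather than $z-y<\sqrt{z}$, but since $y\asymp z$ these are interchangeable.
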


\begin{proof} Note that $\log(z/y)\asymp (z-y)/y$ and that $1/p\asymp 1/y$ for all primes $p\in[y,z]\subseteq[y,100y]$. Hence, it suffices to show that 
	\begin{equation}
		\label{eq:BT}
		\#\{y\le p\le z\} \ll \frac{z-y}{\log y} + y^{1/2}. 
	\end{equation}
If $z\le y+y^{1/2}$, there are at most $y^{1/2}$ primes in $[y,z]$. On the other hand, if $100y\ge z>y+y^{1/2}$, then \eqref{eq:BT} follows from the Brun--Titchmarsh inequality (see e.g., \cite[Theorem 20.1]{dk-book})
\end{proof}

\section{Control on the divisor function}

Let $x >1$. Let us recall our heuristic argument that $\Delta(n)$ behaves like $\max_{y\in[1,x]}\big( \tau(n_{<y})/\Log y\big)$ for integers $n\in\CS_{<x}$. Our ultimate goal is understand the probability that $\Delta(n)>A$. Motivated by our heuristic, we first study the probability of the event that $\max_{y\in[1,x]}\big(\tau(n_{<y})/\Log y\big)>A$. Equivalently, this is the event that there exists some $y\in[1,x]$ such that $\tau(n_{<y})>A\Log y$. From Mertens' theorem we have
\[
\sum_{n \in \CS_{<x}} \frac{\tau(n_{<y})}{n} = \prod_{p<y} \left( 1 + \frac{2}{p} \right)\prod_{y\le p<x} \left(1+\frac{1}{p}\right) \asymp (\Log x)(\Log y),
\]
and hence by Markov's inequality we see that $\tau(n_{<y}) \leq A \Log y$ for all $n \in \CS_{<x}$ outside of an exceptional set $\CE_{A,y}$ with
\begin{equation}
	\label{eq:naive markov bound}
	\sum_{n \in \CE_{A,y}} \frac{1}{n} \ll \frac{\Log x}{A}  .
\end{equation}

We now give a refinement of this simple analysis, in which we have a single exceptional set that covers all $y\in[1,x]$, and furthermore there is an additional Gaussian-type decay outside of the critical regime $\Log_2 y = \frac{\Log A+O(\sqrt{\Log A})}{\log 4-1}$.

\begin{proposition}\label{gauss}  Let $A \geq 1$.
For any $x > 1$, let $\CS_{<x}^A$ denote the collection of all $n \in \CS_{<x}$ such that
\begin{equation}\label{mjn}
 \tau(n_{<y}) \leq A e^{-f_A(y)} \Log y
\qquad\text{for all}\ y\in[1,x],
\end{equation}
where
\begin{equation}\label{fadef}
f_A(y) \coloneqq \delta \min\bggg\{ \frac{ \bg(\Log_2 y-\frac{\Log A}{\log4-1} \bg)^2 }{\Log A}, \Log A + \Log_2 y\bggg\} 
\end{equation}
and $\delta>0$ is a sufficiently small absolute constant.  Then
\begin{equation}
	\label{eq:prop gauss conclusion}
	\sum_{n \in \CS_{<x} \backslash \CS_{<x}^A} \frac{1}{n} \ll \frac{\Log x}{A} .
\end{equation}
\end{proposition}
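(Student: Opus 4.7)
The plan is to reformulate the bad event $n \notin \CS_{<x}^A$ as a countable union over the prime factors of $n$, to bound each piece via a sharp Mertens/Stirling-type estimate on $\omega$, and to exploit the Gaussian profile $e^{-f_A}$ so that the pieces sum to $O(\Log x/A)$.

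First, since $\tau(n_{<y}) = 2^{\omega(n_{<y})}$ is piecewise constant in $y$, jumping only at the prime factors $p_1 < p_2 < \cdots$ of $n$, a violation $\tau(n_{<y}) > A e^{-f_A(y)}\Log y$ on some interval $(p_j,p_{j+1}]$ is equivalent to $p_j < y_j^*$, where $y_j^*$ is the smallest $y$ with $Ae^{-f_A(y)}\Log y \geq 2^j$, defined implicitly by $\Log_2 y_j^* = j\log 2 - \Log A + f_A(y_j^*)$. A direct calculation shows that $y_j^*$ is \emph{critical}, i.e.\ $\Log_2 y_j^* \approx L := \Log A/(\log 4 - 1)$, exactly when $j \approx 2L$, so
\[
\CS_{<x} \setminus \CS_{<x}^A \subseteq \bigcup_{j \geq 1} \{n \in \CS_{<x} : p_j(n) < y_j^*\}.
\]
For each $j$, decomposing $n = n_1 n_2$ with $n_1 \in \CS_{<y_j^*}$ and $n_2 \in \CS_{[y_j^*,x)}$ turns the event into $\omega(n_1) \geq j$, and by Lemma~\ref{mertens} (the $k=0$ case),
\[
\sum_{\substack{n \in \CS_{<x} \\ p_j(n) < y_j^*}} \frac{1}{n} \ll \frac{\Log x}{\Log y_j^*} \sum_{\substack{n_1 \in \CS_{<y_j^*} \\ \omega(n_1) \geq j}} \frac{1}{n_1}.
\]
Using $\sum_{\omega(n_1)=k}1/n_1 = e_k(\{1/p\}_{p<y_j^*})$, the bound $e_k \leq (\Log_2 y_j^* + C)^k/k!$, and Stirling's approximation $k! \asymp (k/e)^k \sqrt k$ (noting that $\sum_{k \geq j}$ is geometric in our regime $j > \lambda := \Log_2 y_j^*$), one obtains
\[
\sum_{\substack{n_1 \in \CS_{<y_j^*} \\ \omega(n_1) \geq j}}\frac{1}{n_1} \ll \frac{e^\lambda}{\sqrt j}\, e^{-\lambda h(j/\lambda)}, \qquad h(u) := u\log u - u + 1.
\]
Combined with $\Log x/\Log y_j^* = \Log x \cdot e^{-\lambda}$, the $j$-th contribution is therefore at most $\Log x\cdot e^{-\lambda h(j/\lambda)}/\sqrt j$.

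Setting $\Delta := j - 2L$, a second-order Taylor expansion of $\Delta \mapsto \lambda h(j/\lambda)$ at $\Delta = 0$, treating $f_A(y_j^*)$ as an $O(\Delta^2/\Log A)$ perturbation via the implicit relation for $\lambda$, shows that in the ``first regime'' $|\Delta| \ll \Log A$,
\[
\lambda h(j/\lambda) = \Log A + \bigg(\frac{(\log 4-1)^3}{4} - \delta \log^2 2\bigg)\frac{\Delta^2}{\Log A} + O\bigg(\frac{|\Delta|^3}{(\Log A)^2}\bigg).
\]
For $\delta$ strictly less than $(\log 4-1)^3/(4\log^2 2)$ the quadratic coefficient $c$ is a positive absolute constant, which yields the per-$j$ bound $\Log x/(A\sqrt{\Log A}) \cdot e^{-c\Delta^2/\Log A}$. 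Summing over $\Delta \in \Z$,
\[
\sum_\Delta \frac{\Log x}{A\sqrt{\Log A}}\,e^{-c\Delta^2/\Log A} \ll \frac{\Log x}{A\sqrt{\Log A}}\cdot \sqrt{\Log A} \ll \frac{\Log x}{A}.
\]
The ``second regime'' of $f_A$ (linear in $\Log A + \Log_2 y$, applying for $|\Delta| \gtrsim \Log A$) is easier: there $f_A$ is so large that $\lambda h(j/\lambda) \geq \Log A + c'|\Delta|$ for some $c' > 0$, giving geometric decay summing to $O(\Log x/A)$. Boundary cases $y_j^* \leq 1$ or $y_j^* > x$ are handled separately with trivial bounds.

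The main obstacle is obtaining the Stirling correction $1/\sqrt j \asymp 1/\sqrt{\Log A}$ in the per-$j$ estimate: without this factor (for example, using only the cruder Chernoff-type bound $\Log x \cdot e^{-\lambda h(j/\lambda)}$), summing a Gaussian of width $\sqrt{\Log A}$ loses a factor of $\sqrt{\Log A}$ and yields only $\Log x(\Log A)^{1/2}/A$. The careful use of Stirling (equivalently, a Selberg--Sath\'e-type count for integers with exactly $k$ prime factors) exactly matches the Gaussian width $\sqrt{\Log A}$ dictated by the $\delta/\Log A$ scaling in $f_A$, and it is this matching that makes the proposition's bound tight; a secondary subtlety is the positivity of $c$, which forces the quantitative choice $\delta < (\log 4-1)^3/(4\log^2 2)$.
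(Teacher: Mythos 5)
Your proposal is correct in outline and shares the same core machinery as the paper (Mertens factorization into $n_{<y}$ and $n_{\geq y}$, a Poisson-type tail bound on $\omega$, a Stirling factor $\asymp 1/\sqrt{\Log A}$, and a Taylor expansion of $Q(t)=t\log t-t+1$ around the critical point $t=2$), but the organizing decomposition is genuinely different. The paper's proof turns the existential bad event (``there exists $y_0$ with $\tau(n_{<y_0}) > Ae^{-f_A(y_0)}\Log y_0$'') into an integral $\int_1^{x^2}(\cdots)\frac{\dee y}{y\Log y}$ by first \emph{propagating} the violation from $y_0$ to all of $[y_0,y_0^2]$; you instead exploit directly that $\tau(n_{<y})=2^{\omega(n_{<y})}$ is piecewise constant with jumps at the prime factors of $n$, so that the bad event is exactly the countable union $\bigcup_j\{p_j(n)<y_j^*\}$ and a union bound over $j$ replaces the integral over $y$. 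This is a legitimately different (and arguably cleaner) reduction, since it avoids the propagation step altogether. The remaining analysis is then a discrete analogue of the paper's continuous one: your $\sum_j e^{-\lambda h(j/\lambda)}/\sqrt{j}$ plays the role of the paper's $\int (\Log y)^{1-Q(t_y)}(\Log A)^{-1/2}\,\dee y/(y\Log^2 y)$. Your explicit quadratic coefficient $\frac{(\log 4-1)^3}{4}-\delta\log^2 2$ and the resulting threshold $\delta < (\log 4-1)^3/(4\log^2 2)$ are a concrete instantiation of the paper's ``$\delta$ small enough''; I verified the coefficient and it is correct.

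Two places where a full write-up would need more care. First, the Taylor expansion with a bare $O(|\Delta|^3/(\Log A)^2)$ error is only under control for $|\Delta|\le\eps\Log A$ with $\eps$ small; for $\eps\Log A<|\Delta|\lesssim\Log A$ the cubic error could in principle compete with the quadratic. The paper sidesteps this by using the Lagrange form $Q(t_y)=Q(2)+Q'(2)(t_y-2)+\tfrac12 Q''(\xi)(t_y-2)^2$ with $\xi$ in a bounded interval and $Q''(\xi)\geq 1/150$, getting a genuine lower bound rather than an approximation; you would want the analogous Lagrange form for $\lambda h(j/\lambda)$ (or simply restrict the Gaussian regime to $|\Delta|\le\eps\Log A$ and let the second regime of $f_A$ absorb the rest). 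Second, the boundary range $y_j^*>x$ is not completely ``trivial'': the factorization $n=n_1n_2$ degenerates to $n_1=n$ and you lose the Mertens savings $\Log x/\Log y_j^*$, so you must check that the Poisson tail with parameter $\Log_2 x+O(1)$ at level $j\ge j_0 \approx \frac{(1-\delta)(\Log A+\Log_2 x)}{\log 2}$ sums to $O(\Log x/A)$; this is the analogue of the paper's half-moment argument in the range $\Log y\ge A^{100}$ and does require a short computation. Neither issue is a gap in the approach, only in the level of detail of the sketch.
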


\begin{rem*}
The upper bound \eqref{eq:prop gauss conclusion} is sharp. When $\Log_2y=\frac{\Log A}{\log4-1}$, relation \eqref{mjn} becomes $\tau(n_{<y})\le (\log y)^{\log4}$ or, equivalently, $\omega(n_{<y})\le2\Log_2y$. This event occurs with probability roughly equal to $1- (\log y)^{-(\log4-1)}=1-1/A$. A more refined analysis, that uses appropriately adapted results of Ford \cite{ford:smirnov and primes} can show that the left-hand side of \eqref{eq:prop gauss conclusion} is $\asymp\frac{\Log x}{A}$. Hence, the naive Markov bound \eqref{eq:naive markov bound} is actually close to the truth in the critical range of $y$.
\end{rem*}

\begin{proof}  We may assume that $A$ is large, as the claim is immediate from Mertens' inequality otherwise.

Suppose $n \in \CS_{<x} \backslash \CS_{<x}^A$.  Then there exists $y_0\in[1,x]$ such that
\[
\tau(n_{<y_0}) > A e^{-f_A(y_0)} \Log y_0.
\]
We claim that this implies the existence of an absolute constant $c>0$ such that 
\begin{equation}
	\label{eq:tau(n_{<y}) for y in [y_0,y_0^2]}
		\tau(n_{<y}) \ge cA e^{-f_A(y)} \Log y
		\qquad\text{for all}\ y\in[y_0,y_0^2].
\end{equation}
Indeed, if $\Log_2y_0\ge 10\Log A$, then $f_A(y)=\delta(\Log A+\Log_2y)$ for all $y\in[y_0,y_0^2]$, so \eqref{eq:tau(n_{<y}) for y in [y_0,y_0^2]} holds for some appropriate choice of $c>0$; on the other hand, if $\Log_2 y_0\le 10\Log A$, then both functions in the right-hand side of \eqref{fadef} change by at most $O(1)$ when $y$ ranges in $[y_0,y_0^2]$, so \eqref{eq:tau(n_{<y}) for y in [y_0,y_0^2]} holds again provided we choose $c>0$ to be small enough. 

Now, using \eqref{eq:tau(n_{<y}) for y in [y_0,y_0^2]}, we find that
\[
\int_1^{x^2} 1_{\tau(n_{<y}) \ge cA e^{-f_A(y)} \Log y} \frac{\dee y}{y \Log y} \gg 1.
\]
We conclude that
\[
\sum_{n \in \CS_{<x} \backslash \CS_{<x}^A} \frac{1}{n} 
	\ll \int_1^{x^2} \sum_{n \in \CS_{<x}} \frac{1_{\tau(n_{<y}) \ge cA e^{-f_A(y)} \Log y}}{n} 
	\cdot \frac{\dee y}{y \Log y}.
\]
Factoring $n = n_{<y} n_{\geq y}$ and using Mertens' theorem we have
\[
\sum_{n \in \CS_{<x}} \frac{1_{\tau(n_{<y}) \ge cA e^{-f_A(y)} \Log y}}{n} 
\asymp \frac{\Log x}{\Log y} \sum_{n \in \CS_{<y}} \frac{1_{\tau(n) \ge cA e^{-f_A(y)} \Log y}}{n} ,
\]
so it suffices to show that
\begin{equation}\label{mango}
\int_1^{x^2} \sum_{n \in \CS_{<y}} \frac{1_{\tau(n) 
			\ge cA e^{-f_A(y)} \Log y}}{n}\cdot \frac{\dee y}{y \Log^2 y} \ll \frac{1}{A}.
\end{equation}

First, we dispose of some easy contributions.  If $\Log y \leq A^{0.01}$, then we bound
\[
\sum_{n \in \CS_{<y}} \frac{1_{\tau(n) \ge cA e^{-f_A(y)} \Log y}}{n}
\leq \frac{1}{(cA e^{-f_A(y)} \Log y)^2} \sum_{n \in \CS_{<y}} \frac{\tau(n)^2}{n} 
\ll \frac{\Log^2 y}{A^2} e^{2f_A(y)}
\]
by Lemma \ref{mertens}, and the contribution of this case to the left-hand side of \eqref{mango} is easily seen to be acceptable for $\delta\le1/3$, which we may assume.

In the other extreme, if $\Log y \geq A^{100}$, then we bound
\[
\sum_{n \in \CS_{<y}} \frac{1_{\tau(n) \ge cA e^{-f_A(y)} \Log y}}{n}
\leq \frac{1}{(cA e^{-f_A(y)} \Log y)^{1/2}} \sum_{n \in \CS_{<y}} \frac{\tau(n)^{1/2}}{n} 
\ll \frac{(\Log y)^{\sqrt{2}-1/2}}{A^{1/2}} e^{f_A(y)/2}
\]
using Lemma \ref{mertens} again, and one can check here too that this contribution to the left-hand side of \eqref{mango} is acceptable if $\delta\le1/20$, which we may assume.

In conclusion, in order to prove \eqref{mango}, it will suffice to establish a bound of the form
\begin{equation}\label{orange} 
\sum_{n \in \CS_{<y}} \frac{1_{\tau(n) \ge cA e^{-f_A(y)} \Log y}}{n} \ll \frac{e^{-f_A(y)}}{A (\Log A)^{1/2}} \Log y
\end{equation}
whenever $A^{0.01} \le \Log y \le A^{100}$. This essentially follows by work of Norton \cite{norton} (see also \cite[Theorems 08 and 09]{HT-book}). We give the details below.

We have $\tau(n) = 2^{\omega(n)}$, and thus $\tau(n)\ge cA e^{-f_A(y)}\Log y$ if, and only if, 
\[
\omega(n)\ge k_y \coloneqq \fl{\frac{\log c+\log A - f_A(y) +\log(\Log y)}{\log 2}}  .
\]
In addition, for each $k\in\Z_{\ge0}$ we have
\[
\sum_{\substack{n \in \CS_{<y} \\ \omega(n) = k}} \frac{1}{n} \leq \frac{1}{k!} \bggg(\sum_{p<y} \frac{1}{p}\bggg)^k 
\leq \frac{(\Log_2 y + C)^k}{k!} 
\]
for some constant $C>0$, by Mertens' theorem \cite[Theorem 3.4(b)]{dk-book}. 
Notice that  $k_y \geq 1.1 (\Log_2 y+C)$, which implies that the quantities $\frac{1}{k!} (\Log_2 y + C)^k$ decay at least exponentially fast for $k\ge k_y$. We thus conclude that
\[ 
\sum_{n \in \CS_{<y}} \frac{1_{\tau(n) \ge cA e^{-f_A(y)} \Log y}}{n}
	\le \sum_{k\ge k_y} \frac{(\Log_2 y + C)^k}{k!} 
	\ll \frac{(\Log_2 y + C)^{k_y}}{k_y!} .
\]
By Stirling's formula and the bounds $k_y \asymp \Log_2 y \asymp \Log A$, we then have
\begin{equation}
	\label{eq:stirling for divisor bound}
	\sum_{n \in \CS_{<y}} \frac{1_{\tau(n) \ge cA e^{-f_A(y)} \Log y}}{n}
	\ll \frac{(\Log y)^{1-Q(t_y)}}{(\Log A)^{1/2}},
\end{equation}
where
\[
Q(t)=t\log t-t+1
\quad\text{and}\quad
t_y= \frac{k_y}{\Log_2y+C} =  \frac{\Log A-f_A(y)+\Log_2y}{(\log 2) \Log_2y} + O\bgg(\frac{1}{\Log_2y}\bgg) .
\]
Observe that $t_y\in[1.1,150]$ when $A^{0.01}\le\Log y\le A^{100}$, $\delta\le 1/5$ and $A$ is large enough.

Now, note that 
\begin{equation}
	\label{eq:t-e1}
	t_y-2 = \frac{\Log A-(\log4-1)\Log_2y - f_A(y)}{(\log 2)\Log_2y} + O\bgg(\frac{1}{\Log_2y}\bgg) .
\end{equation}
In addition, we have $0\le f_A(y)\le 100\delta|\Log_2y-\frac{\Log A}{\log 4-1}|$, and thus
\begin{equation}
	\label{eq:t-e2}
	\frac{|\Log_2y - \frac{\Log A}{\log4-1}|}{2\Log_2y} \le |t_y-2| \le \frac{|\Log_2y - \frac{\Log A}{\log4-1}|}{\Log_2y}. 
\end{equation}
if $\delta$ is small enough and $A$ is large enough. We shall now use Taylor's theorem to approximate $Q(t_y)$ by $Q(2)$. Since $t_y\in[1.1,150]$, there must exist some $\xi\in[1.1,150]$ such that 
\[
Q(t_y)=  Q(2)+Q'(2) (t_y-2) + Q''(\xi) \frac{(t_y-2)^2}{2} .
\]
We have $Q(2)=\log4-1$, $Q'(2)=\log2$ and $Q''(\xi)=1/\xi\ge 1/150$. We then use \eqref{eq:t-e2} to obtain a lower bound on $(t_y-2)^2$, and subsequently \eqref{eq:t-e1} to estimate $t_y-2$. In conclusion, we have
\begin{align*}
		Q(t_y)\Log_2y &\ge (\log4-1)\Log_2y + (t_y-2) (\log2)\Log_2y + 2f_A(y) \\
		&= \Log A+f_A(y)+O(1),
\end{align*}
as long as $\delta$ is small enough. Inserting this estimate into \eqref{eq:stirling for divisor bound} completes the proof of \eqref{orange}, and thus of the proposition.
\end{proof}

\section{The key moment estimate}

For inductive purposes we will need to introduce a quantity $m_{q,A}$ depending on several parameters $C_0, A, q$. According to these quantities, we shall then define $\CS_{<x}^{q,A}$ to be the set of all integers $n\in\CS_{<x}^A$ such that 
\begin{equation}\label{matn}
	M_a(n)/\tau(n) \leq m_{a,A}
	\qquad\text{for all}\ a=1,2,\dots,q.
\end{equation}
Observe that $M_1(n)=\tau(n)$, and thus the above inequality is trivially satisfied when $a=1$ as long as we ensure that 
\[
m_{1,A}\ge1.
\]
In particular, 
\begin{equation}
	\label{eq:S when q=1}
	\CS_{<x}^{1,A}=\CS_{<x}^A.
\end{equation}

Clearly we have the inclusions
\[
\CS_{<x} \supset \CS^{1,A}_{<x} \supset \CS^{2,A}_{<x} \supset \dots.
\]
In addition, from \eqref{mq0} we have
\[
M_a(pn)/\tau(pn) \geq M_a(n)/\tau(n)
\]
whenever $p$ is a prime, $n$ is coprime to $p$, and $a \geq 1$.  In particular,  $M_a(n_{<y})/\tau(n_{<y})$ is a non-decreasing function of $y$, and thus 
\[
M_a(n_{<y})/\tau(n_{<y}) \leq m_{a,A}
	\qquad\text{for}\ a=1,2,\dots,q\ \text{and}\ y\in[1,x].
\]
In other words, we have that
\begin{equation}\label{csqy}
	n_{<y} \in \CS^{q,A}_{<y}
	\quad\text{whevever}\ n \in \CS^{q,A}_{<x}\ \text{and}\ y\in[1,x] .
\end{equation}

We shall choose
\begin{equation}\label{mq-def}
	m_{q,A} \coloneqq \frac{q!}{q^2} (C_0 A)^{q-1} (\Log A)^{\frac{1}{2}(q-1+\lfloor q/2 \rfloor )},
\end{equation}
where $C_0$ is a large enough constant to be determined. We now show that our choice satisfies certain properties.

\begin{lemma}[The recursive upper bound]\label{recurse}  The following properties hold, with all implied constants independent of $q,A$ and $C_0$:
\begin{itemize}
\item[(i)]  One has $m_{1,A} \geq 1$, $m_{2,A} \gg A \Log A$, and $m_{q,A} \gg (C_0 A/3)^{q-1} q^q$. 
\item[(ii)]  For any $q \geq 3$, one has
\[
\sum_{\substack{a+b=q \\ 1 \le b \le q/2}} \binom{q}{a} m_{b,A} m_{a,A} \ll \frac{1}{C_0 A (\Log A)^{1/2}} \cdot m_{q,A}.
\]
\item[(iii)]  For any $q \geq 1$, one has
\[
(A m_{q,A})^{1/q} \ll q C_0 A (\Log A)^{3/4}.
\]
\end{itemize}
\end{lemma}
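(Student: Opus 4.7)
I will prove all three parts by direct manipulation of the explicit formula \eqref{mq-def} for $m_{q,A}$. The main analytic tool is Stirling's approximation, and the key combinatorial input, which will do the real work in part (ii), is the super-additivity inequality $\lfloor a/2\rfloor+\lfloor b/2\rfloor \leq \lfloor q/2\rfloor$ valid whenever $a+b=q$; this ensures that the $(\Log A)$-exponents of $m_{a,A}m_{b,A}$ combine favorably when compared to those of $m_{q,A}$.

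Parts (i) and (iii) I treat first as warm-ups. The two explicit cases of (i) follow by plugging $q=1,2$ into \eqref{mq-def}, yielding $m_{1,A}=1$ and $m_{2,A}=\tfrac12 C_0A\Log A$. For the general lower bound in (i), Stirling gives $q!/q^2 \gg q^q e^{-q} q^{-3/2}$, and since $3/e > 1$ the factor $(3/e)^q q^{-3/2}$ is bounded below by an absolute positive constant; this, together with the harmless $(\Log A)$-factor (which is $\geq 1$ for $A$ large), produces the claim. Part (iii) I will handle analogously: taking $q$-th roots of $Am_{q,A}$ produces
\[
(A m_{q,A})^{1/q} \ll (q!/q^2)^{1/q}\,A\,C_0^{(q-1)/q}\,(\Log A)^{(q-1+\lfloor q/2\rfloor)/(2q)},
\]
and the bound follows from $(q!)^{1/q}\ll q$ combined with the arithmetic observation $(q-1+\lfloor q/2\rfloor)/(2q)\leq 3/4$ valid for every $q\geq 1$.

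The substance is part (ii). Expanding the defining formula and using the identity $\binom{q}{a}a!\,b!=q!$, I obtain
\[
\binom{q}{a} m_{a,A} m_{b,A} = \frac{q!}{a^2 b^2}\,(C_0 A)^{q-2}\,(\Log A)^{(q-2+\lfloor a/2\rfloor+\lfloor b/2\rfloor)/2};
\]
the super-additivity noted above then bounds the $\Log A$-exponent by $(q-2+\lfloor q/2\rfloor)/2$, so that the right-hand side is at most $\tfrac{q^2}{a^2 b^2}\cdot\tfrac{m_{q,A}}{C_0A(\Log A)^{1/2}}$. Since $b\leq q/2$ forces $a=q-b\geq q/2$, one has $q^2/a^2=O(1)$, and the remaining sum $\sum_{b\geq 1} b^{-2} = \zeta(2)$ converges. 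This delivers (ii) with an absolute implied constant. I do not anticipate a genuine obstacle: the formula \eqref{mq-def} was evidently engineered precisely so that these three inequalities hold simultaneously, and the proof is essentially bookkeeping around Stirling's formula and the super-additivity of $\lfloor\cdot/2\rfloor$.
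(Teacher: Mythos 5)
Your proof is correct and follows essentially the same route as the paper: parts (i) and (iii) are direct bookkeeping from \eqref{mq-def} via Stirling-type bounds and the inequality $q-1+\lfloor q/2\rfloor\le 3q/2$, and for part (ii) you perform exactly the paper's computation, using $\binom{q}{a}a!\,b!=q!$, the super-additivity $\lfloor a/2\rfloor+\lfloor b/2\rfloor\le\lfloor q/2\rfloor$, the fact that $a\asymp q$ when $b\le q/2$, and the convergence of $\sum_b b^{-2}$. The only minor difference is that you spell out the Stirling lower bound for (i) (observing that $(3/e)^q q^{-3/2}$ has a positive infimum because $3/e>1$), which the paper leaves implicit.
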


\begin{proof} The claims (i) and (iii) are clear from \eqref{mq-def} (bounding $q! \leq q^q$ and $q-1+\fl{q/2}\le 3q/2$).  For (ii), we calculate
\[
\binom{q}{a} m_{b,A} m_{a,A} 
	= \frac{q!}{a^2 b^2} (C_0 A)^{a+b-2} (\Log A)^{\frac{1}{2}(a+b-2+\fl{a/2}+\fl{b/2})} .
\]
Noticing that $a+b=q$, $\fl{a/2}+\fl{b/2}\le \fl{q/2}$, and $a^2 \asymp q^2$, the claim follows from the summability of $\sum_{b=1}^\infty \frac{1}{b^2}$.
\end{proof}

We now prove the following key moment estimate. In its proof, we shall only use the three properties of the parameters $m_{q,A}$ given in Lemma \ref{recurse}. We may thus think of these properties as the only axioms our parameters need to satisfy.

\begin{proposition}[Key moment estimate]\label{iterative}  Suppose that $C_0 \geq 1$ is a sufficiently large constant, and $A \geq 1$.  Then for any $q \geq 2$ and $x > 1$ we have the bound
\begin{equation}\label{targ}
 \sum_{n \in \CS^{q-1,A}_{<x}} \frac{M_q(n)/\tau(n)}{n} \leq \frac{C_0}{q^2 A} m_{q,A} \Log x.
\end{equation}
\end{proposition}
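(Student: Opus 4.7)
The plan is to proceed by induction on $k \coloneqq \omega(n)$, defining
\[
S_k(x) \coloneqq \sum_{\substack{n \in \CS_{<x}^{q-1,A} \\ \omega(n)=k}} \frac{M_q(n)/\tau(n)}{n}
\]
and proving, for appropriate constants, a bound of the form $S_k(x) \le B \cdot (\Log_2 x + C_1)^k/k!$ that sums (via $\sum_k t^k/k! = e^t$) to at most $\frac{C_0 m_{q,A}}{q^2 A} \Log x$. The base case $k=0$ reduces to $1 \le B$, which is compatible with the target by Lemma \ref{recurse}(i) for $C_0$ large.

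For $k \ge 1$, each $n$ with $\omega(n)=k$ admits $k$ decompositions $n = pm$ (one per prime factor), so averaging yields
\[
S_k(x) = \frac{1}{k} \sum_{p<x}\frac{1}{p}\sum_{\substack{\omega(m)=k-1,\ p\nmid m \\ pm \in \CS_{<x}^{q-1,A}}} \frac{M_q(pm)/\tau(pm)}{m}.
\]
The crucial point is that $pm \in \CS_{<x}^{q-1,A}$ implies $m \in \CS_{<x}^{q-1,A}$: the bound \eqref{mjn} for $m$ follows from $\tau(m_{<y}) \le \tau((pm)_{<y})$, and the bounds \eqref{matn} for $m$ follow from the monotonicity $M_a(m)/\tau(m) \le M_a(pm)/\tau(pm)$ recorded just before \eqref{csqy}. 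Substituting \eqref{mq-2}, the diagonal contribution from $M_q(m)/\tau(m)$, combined with $\sum_{p<x}1/p = \Log_2 x + O(1)$ via Mertens' theorem and the inductive hypothesis, is at most $\frac{\Log_2 x + O(1)}{\Log_2 x + C_1}$ times the target for $S_k$, leaving a positive gap when $C_1$ is sufficiently large.

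The off-diagonal error involves integrals $I_{a,b}(m,p) = \int \Delta(m;u)^a \Delta(m; u - \log p)^b\, \dee u$ with $a+b=q$ and $1 \le b \le q/2$. The plan is to apply H\"older's inequality asymmetrically: control $\Delta(m;u)^a$ pointwise using \eqref{mjn} (which gives $\Delta(m;u) \le \tau(m_{<e^{u+1}}) \le A e^{-f_A(e^{u+1})}(u+1)$), while handling the second factor via the identity $\int \Delta(m;u-\log p)^b\, \dee u = M_b(m)$ together with the moment bound $M_b(m) \le \tau(m)\, m_{b,A}$ from \eqref{matn}. The principal obstacle is that the pointwise bound from \eqref{mjn} attains values of size $A^{O(1)}$ near the Gaussian peak $\Log_2 y \sim \Log A/(\log 4 -1)$, so a crude $L^\infty$ application would be far too wasteful; one must combine the Gaussian decay of $f_A$, the shift by $\log p$ in the second $\Delta$-factor, and a Brun--Titchmarsh estimate (Lemma \ref{bt}) for the $p$-sum near the peak to produce the clean product structure
\[
\sum_{p<x}\frac{1}{p}\sum_m \frac{I_{a,b}(m,p)}{\tau(m)m} \ll m_{a,A}\, m_{b,A} \cdot \frac{(\Log_2 x + O(1))^k}{k!}.
\]
Summing over $(a,b)$ via Lemma \ref{recurse}(ii) then bounds the total off-diagonal contribution by $\frac{m_{q,A}}{C_0 A(\Log A)^{1/2}} \cdot (\Log_2 x + O(1))^k/k!$, which is absorbed into the gap from the main term once $C_0$ is chosen large enough, completing the induction.
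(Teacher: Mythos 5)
Your overall strategy diverges from the paper's in two structurally important ways, and both of them create gaps that the sketch does not resolve.

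First, your split of the key integral is \emph{opposite} to the one that actually works. You propose to bound $\Delta(m;u)^a$ pointwise via \eqref{mjn} and then use $\int_\R \Delta(m;u-\log p)^b\,\dee u = M_b(m)$. But the pointwise bound $\Delta(m;u)\le \tau(m_{<e^{u+1}}) \le A e^{-f_A(e^{u+1})}(u+1)$ is lossy by roughly a factor of $\Log y$ in the critical range $\Log_2 y \asymp (\Log A)/(\log 4 - 1)$ (where $\Log y\asymp A^{1/(\log 4 - 1)}$), since divisors of $m_{<y}$ are spread over $\asymp\Log y$ windows of length $1$. Raising this to the $a$\textsuperscript{th} power with $a\ge q/2$ inflates the loss exponentially in $q$. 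The paper's argument avoids this entirely: it keeps $\Delta(m;u)^a$ intact and integrates it to $M_a(m)$, and applies the pointwise control to the inner $p$-sum $\sum_{p\ge y^{1/2}} p^{-1}\Delta(m;u-\log p)^b$, which Brun--Titchmarsh plus the Maier--Tenenbaum inequality \eqref{eq:MT ineq} reduces to $\ll M_b(m)/\Log y$ \emph{uniformly in $u$}. That uniform $1/\Log y$ from Brun--Titchmarsh is what makes the outer $\dee y/(y\Log y)$ integral converge at the right rate; your allocation of the tools does not reproduce it, and your sketch does not say concretely how the Gaussian decay and the shift by $\log p$ would recover the loss.

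Second, and more fundamentally, you have dropped the induction on $q$. The paper inducts on the moment index $q$ and at the crucial step replaces the inner sum $\sum_{m\in\CS^{q-1,A}_{<y}} (M_a(m)/\tau(m))/m$ by $T_a(y)$ and invokes the inductive hypothesis $T_a(y)\le \frac{C_0}{a^2 A}m_{a,A}\Log y$. This supplies two savings that no pointwise use of \eqref{matn} can provide: a factor $1/A$ (the inner sum over $m$ is of size $\asymp\Log y/A$ rather than $\asymp\Log y$), and a factor $1/a^2 \asymp 1/q^2$. Your scheme, which inducts only on $\omega(n)$ at fixed $q$, has access only to the static pointwise bounds $M_j(m)\le m_{j,A}\tau(m)$ baked into the definition of $\CS^{q-1,A}_{<x}$, and hence cannot extract the extra $A^{-1}q^{-2}$ that the target \eqref{targ} demands. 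Concretely, your claimed off-diagonal bound, once fed through Lemma~\ref{recurse}(ii) and the $1/k$ averaging and compared against the gap in the diagonal step, forces an inequality of the shape $q^2 \Log_2 x \ll C_0^2 C_1 (\Log A)^{1/2}$, which fails for large $x$ and for the eventual choice $q\asymp\Log_2 x$. Without the $q$-induction there is no mechanism to close this.

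In short: the averaging over prime factors and the reduction of the diagonal term to $S_{k-1}$ is a reasonable start, but the proposal's treatment of the off-diagonal term is both in the wrong shape (wrong H\"older split) and missing the essential ingredient (the inductive hypothesis in $q$), so it does not constitute a proof of Proposition~\ref{iterative}.
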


\begin{proof}  We induct on $q$, assuming that the claim has already been proven for all smaller values of $q$ (this assumption is vacuous for $q=2$).  We fix $A$ and introduce the notation
\[
T_q(x) \coloneqq \sum_{n \in \CS^{q-1,A}_{<x}} \frac{M_q(n)/\tau(n)}{n}.
\]
Every natural number $n \in \CS^{q-1,A}_{<x}$ other than $1$ is expressible in the form $n = pm$ with $p < x$ a prime and $m \in \CS^{q-1,A}_{<p}$ (here we use \eqref{csqy}).  Thus
\[
T_q(x) \leq 1 + \sum_{p<x} \sum_{n \in \CS^{q-1,A}_{<p}} \frac{M_q(pn)/\tau(pn)}{pn}.
\]
Applying \eqref{mq-2}, we conclude that
\[
T_q(x) \leq \sum_{p<x} \frac{T_q(p)}{p} + Q_q(x),
\]
where
\begin{equation}
	\label{eq:Qq dfn}
	Q_q(x) \coloneqq 1 + \sum_{p<x} \sum_{n \in \CS^{q-1,A}_{<p}} \frac{1}{\tau(n) pn}
	\sum_{\substack{a+b=q \\ 1 \leq b \leq q/2}} \binom{q}{a} \int_\R \Delta(n;u)^a \Delta(n;u-\log p)^b\dee u.
\end{equation}
We can iterate this inequality in the obvious fashion to arrive at
\[
T_q(x) \leq Q_q(x) + \sum_{\substack{ n \in \CS_{<x}\\ n > 1} } \frac{Q_q(P^-(n))}{n} , 
\]
where $P^-(n)$ is the least prime factor of $n$ with the convention that $P^-(1)=+\infty$.  Note that
\[
\sum_{\substack{n \in \CS_{<x} \\  P^-(n)=p_0}} \frac{1}{n} 
	= \frac{1}{p_0} \prod_{p_0 < p < x} \bggg(1+\frac{1}{p}\bggg) \asymp \frac{1}{p_0} \cdot\frac{\Log x}{\Log p_0}
\]
for any prime $p_0 < x$, and thus
\begin{equation}\label{tax}
 T_q(x) \ll Q_q(x) + \sum_{p < x} \frac{Q_q(p)}{p} \cdot \frac{\Log x}{\Log p}.
\end{equation}

We now turn to the estimation of $Q_q(x)$. Recall its definition in \eqref{eq:Qq dfn}. Note that if $n \in \CS_{<p}^{q-1,A}$, then $n \in \CS_{<y}^{q-1,A}$ for all $y \in [p,p^2]$ because $n_{<y}=n_{<p}$ for all such values of $y$ and the function $w\to e^{-f_A(w)}\Log w$ is increasing. Since $\int_p^{p^2}\dee y/(y\Log y)\asymp1$, we conclude that
\begin{align*}
	Q_q(x) &\ll 1 + \sum_{p<x} \int_{p}^{p^2}\sum_{n \in \CS^{q-1,A}_{<y}} \frac{1}{\tau(n) pn}  
	\sum_{\substack{a+b=q \\ 1 \leq b \leq q/2}} \binom{q}{a} \int_\R \Delta(n;u)^a \Delta(n;u-\log p)^b\dee u  \frac{\dee y}{y\Log y} \\
	&\le 1 + \int_1^{x^2} \int_\R\sum_{\substack{a+b=q \\ 1 \leq b \leq q/2}}  \binom{q}{a}\sum_{n \in \CS^{q-1,A}_{<y}} \sum_{p \geq y^{1/2}} \frac{1}{\tau(n) pn}  \Delta(n;u)^a \Delta(n;u-\log p)^b\dee u \frac{\dee y}{y \Log y}.
\end{align*}

From \eqref{deltanu-def} followed by Lemma \ref{bt} we have
\begin{align*} 
\sum_{p \geq y^{1/2}} \frac{1}{p} \Delta(n;u-\log p)^b
&= \sum_{p \geq y^{1/2}}\frac{1}{p} \sum_{\substack{d_1,\dots,d_b|n \\ u-\log p < \log d_1,\dots,\log d_b \leq u-\log p+1 }} 1 \\
&= \sum_{\substack{d_1,\dots,d_b|n \\ \log d_{\max} < \log d_{\min}+1}} \sum_{\substack{p \geq y^{1/2} \\ u-\log d_{\min} < \log p \leq u -\log  d_{\max} + 1}} \frac{1}{p} \\
&\ll \sum_{\substack{d_1,\dots,d_b|n \\ \log d_{\max} < \log d_{\min}+1}}  \left(\frac{\log d_{\min} + 1 -\log  d_{\max}}{\Log y} + \frac{1}{y^{1/4}}\right),
\end{align*}
where we adopt the shorthand $d_{\min} \coloneqq \min(d_1,\dots,d_b)$ and $d_{\max} \coloneqq \max(d_1,\dots,d_b)$.  A similar computation also gives
\begin{align*}
 M_b(n) = \sum_{\substack{d_1,\dots,d_b|n \\ \log d_{\max} < \log d_{\min}+1}} \int_{u < \log d_1,\dots,\log d_b \leq u+1}\dee u 
 	=  \sum_{\substack{d_1,\dots,d_b|n \\ \log d_{\max} < \log d_{\min}+1}} (\log d_{\min} + 1 - \log d_{\max}),
\end{align*}
while
\begin{align}
\sum_{\substack{d_1,\dots,d_b|n \\ \log d_{\max} < \log d_{\min}+1}} 1
&\leq \sum_{\substack{d_1,\dots,d_b|n \\ \log d_{\max} < \log d_{\min}+2}} (\log d_{\min} + 2 - \log d_{\max}) \nn
&= \int_\R (\Delta(n;u) + \Delta(n;u+1))^b\ du \nn
&\leq 2^b M_b(n) \label{eq:MT ineq}
\end{align}
thanks to the triangle inequality in $L^b$ (the proof of inequality \eqref{eq:MT ineq} goes back to Maier and Tenenbaum \cite{MT2}). Combining all these estimates, we obtain the bound
\begin{equation}\label{qx}
 Q_q(x) \ll 1 + \int_1^{x^2} \sum_{\substack{a+b=q \\ 1 \leq b \leq q/2}} \binom{q}{a} \sum_{n \in \CS^{q-1,A}_{<y}} \left( \frac{1}{\Log y} + \frac{2^b}{y^{1/4}} \right) \frac{M_a(n) M_b(n)}{\tau(n) n} \cdot \frac{\dee y}{y \Log y}.
\end{equation}

At this point we split our analysis into the base case $q=2$ and the inductive case $q>2$.  

\medskip

{\it Base case $q=2$.} We must then have $a=b=1$.  Since $M_1(n) = \tau(n)$ and $\CS_{<x}^{1,A}=\CS_{<x}^A$ (cf.~\eqref{eq:S when q=1}), the bound \eqref{qx} simplifies to
\[
Q_2(x) \ll 1 + \int_1^{x^2} \sum_{n \in \CS^A_{<y}} \frac{\tau(n)}{n} \cdot \frac{\dee y}{y\Log^2 y}.
\]
On the one hand, we have from Mertens' theorem that
\[
\sum_{n \in \CS^A_{<y}} \frac{\tau(n)}{n} \leq \prod_{p<y} \bggg(1+\frac{2}{p}\bggg) \ll \Log^2 y	.
\]
On the other hand, from \eqref{mjn} and Lemma \ref{mertens} one has
\[
\sum_{n \in \CS^A_{<y}} \frac{\tau(n)}{n} 
	\leq \bg( A e^{-f_A(y)} \Log y\bg)^{1/2}  \sum_{n \in \CS_{<y}} \frac{\tau(n)^{1/2}}{n} 
	\ll A^{1/2} (\Log y)^{1/2+\sqrt{2}}.
\]
Consequently,
\[
Q_2(x) \ll 1 + \int_1^{x^2} \min\bg\{ A^{1/2} (\Log y)^{-0.01} , 1 \bg\} \frac{\dee y}{y} 
 	\ll \min\bg\{A^{1/2}(\Log x)^{0.99}, \Log x\bg\},
\] 
and thus by \eqref{tax}
\[
T_2(x) \ll \min\bg\{A^{1/2}(\Log x)^{0.99}, \Log x\bg\}
	+ \sum_{p < x} \frac{\min\bg\{A^{1/2}(\Log p)^{0.99}, \Log p\bg\}}{p} \cdot \frac{\Log x}{\Log p}.
\]
Dividing the summation into the ranges $\Log p \leq A^{50}$ and $\Log p > A^{50}$, and using Mertens' theorem, we conclude that
\[
T_2(x) \ll (\Log A) (\Log x) \ll \frac{1}{A} \cdot m_{2,A} \Log x
\]
thanks to Lemma \ref{recurse}(ii). Thus the claim \eqref{targ} follows for $C_0$ large enough. This concludes the treatment of the base case $q=2$.

\medskip

{\it Inductive case $q>2$.} We first handle the lower order term
\[
R_q(x)\coloneqq \int_1^{x^2} \sum_{\substack{a+b=q \\ 1 \leq b \leq q/2}} \binom{q}{a} \sum_{n \in \CS^{q-1,A}_{<y}} \frac{2^b}{y^{1/4}} \cdot \frac{M_a(n) M_b(n)}{\tau(n) n} \cdot \frac{\dee y}{y \Log y}
\]
appearing in \eqref{qx}.  We crudely use H\"older's inequality to bound
\[
M_a(n) M_b(n) \leq M_1(n) M_{q-1}(n) \le \tau(n)^q (1+\log n) .
\]
Since we also have $\sum_{a+b=q} \binom{q}{a} 2^b = 3^q$, we conclude that
\[
R_q(x) \le 3^q \int_1^{x^2} \sum_{n \in \CS^{q-1,A}_{<y}} \frac{\tau(n)^{q-1}(1+\log n)}{n} 
	\cdot \frac{\dee y}{y^{5/4} \Log y}.
\]
From \eqref{mjn} we have
\[
\tau(n)^{q-1} \leq (A\Log y)^{q-2} \tau(n),
\]
while 
\begin{align*}
\sum_{n \in \CS_{<y}} \frac{\tau(n)(1+\log n)}{n} &\le \bigg(1+2\sum_{p<y} \frac{\log p}{p} \bigg)  \prod_{p<y}\bggg(1+\frac{2}{p}\bggg) \ll (\Log y)^3.
\end{align*}
Thus
\[
R_q(x) \ll 3^q A^{q-2} \int_1^\infty \frac{(\Log y)^q \dee y}{y^{5/4}}
= 3^q A^{q-2} \cdot 4^{q+1} q! \le 12^{q+1} q^q A^{q-2} ,
\]
as can be seen by the change of variables $y=e^{4u}$. Inserting this into \eqref{qx} we conclude that
\begin{equation}
	\label{eq:bound for Rq}
	Q_q(x) \ll 12^q q^q A^{q-2} + Q'_q(x),
\end{equation}
where
\[
Q_q'(x)\coloneqq \int_1^{x^2} \sum_{\substack{a+b=q \\ 1 \leq b \leq q/2}} \binom{q}{a} \sum_{n \in \CS^{q-1,A}_{<y}} \frac{M_a(n) M_b(n)}{\tau(n) n} \cdot \frac{\dee y}{y \Log^2 y}.
\]
Applying successively \eqref{matn} and \eqref{mjn}, we find that
\[
M_b(n) \leq m_{b,A} A e^{-f_A(y)} \Log y ,
\]
and thus
\[
 Q_q'(x) \le \int_1^{x^2} \sum_{\substack{a+b=q \\ 1 \leq b \leq q/2}} \binom{q}{a} m_{b,A} A e^{-f_A(y)} T_a(y) \frac{\dee y}{y \Log y}.
 \]
Since $q > 2$, $a+b=q$, and $1 \leq b \leq q/2$, we have $2 \leq a < q$, and hence by induction hypothesis
\[
T_a(y) \leq \frac{C_0}{a^2 A} m_{a,A} \Log y.
\]
Since $a\ge q/2$, we have $a^2\ge q^2/4$. As a consequence,
\[
 Q_q'(x) \le \frac{4C_0}{q^2} \int_1^{x^2} \sum_{\substack{a+b=q \\ 1 \leq b \leq q/2}} \binom{q}{a} m_{a,A} m_{b,A} e^{-f_A(y)} \frac{\dee y}{y}
 \]
and hence by Lemma \ref{recurse}(ii)
\[
 Q_q'(x) \ll \frac{m_{q,A}}{q^2 A (\Log A)^{1/2}} \int_1^{x^2} e^{-f_A(y)} \frac{\dee y}{y}.
 \]
 
We make the change of variables $y=e^{e^t}$ to find that
\[
\int_1^{x^2} e^{-f_A(y)} \frac{\dee y}{y} \le \int_{-\infty}^{\Log_2x+1} e^{t-f_A(\exp\exp (t))}\dee t \ll e^{-f_A(x)} \Log x,
\]
where we used \eqref{fadef} with $\delta$ small enough to show that the function $t-f_A(\exp\exp(t))$ is piecewise differentiable with derivative bounded from below by an absolute positive constant.  
In conclusion,
\[
Q_q'(x) \ll \frac{e^{-f_A(x)}m_{q,A}\Log x}{q^2 A (\Log A)^{1/2}} .
\]
Together with \eqref{eq:bound for Rq}, this implies that
\[
Q_q(x)\ll 12^q q^q A^{q-2} + \frac{e^{-f_A(x)}m_{q,A}\Log x}{q^2 A (\Log A)^{1/2}} .
\]

Inserting the above bound into \eqref{tax}, and using Mertens' theorem, we conclude that
\begin{equation}
	\label{eq:Tq almost there}
	T_q(x) \ll 12^q q^q A^{q-2}  \Log x + \frac{m_{q,A} \Log x}{q^2 A (\Log A)^{1/2}} 
	\bigg(1 + \sum_{p} \frac{e^{-f_A(p)}}{p}\bigg) ,
\end{equation}
where we used that the sum $\sum_p \frac{1}{p\log p}$ converges. Finally, we break up the sum $\sum_p \frac{e^{-f_A(p)}}{p}$ over $p$ on the right-hand side of \eqref{eq:Tq almost there} into intervals such that $j \leq \Log_2 p < j+1$ for some $j\in\Z_{\ge0}$. For each fixed $j$, we have $f_A(p)=f_A(\exp\exp(j)) +O(1)$ as well as $\sum_{j\le \Log_2p<j+1} \frac{1}{p} \ll 1$ by Mertens' theorem. Consequently,
\[
\sum_{p} \frac{e^{-f_A(p)}}{p} 
	\ll \sum_{j\ge1} e^{-f_A(\exp\exp(j))} \ll (\Log A)^{1/2} ,
\]
by the definition of $f_A$ (cf.~\eqref{fadef}). Hence, using Lemma \ref{recurse}(i) we conclude (for $C_0$ large enough) that
\[
T_q(x) \leq \frac{C_0}{q^2 A} m_{q,A} \Log x.
\]
This completes the proof of the proposition.
\end{proof}

\section{Closing the argument}

Henceforth we fix $C_0$ so that Proposition \ref{iterative} applies, and allow implied constants to depend on $C_0$.

\begin{corollary}[Weak type estimate] Uniformly for $\lambda\ge1$, we have
\[
\sum_{\substack{ n \in \CS_{<x} \\  \Delta(n) \geq \lambda\Log_2x}} \frac{1}{n} 
	\ll \frac{(\Log \lambda)^{3/4}}{\lambda} \cdot \Log x.
\]
\end{corollary}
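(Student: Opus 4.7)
The plan is to find parameters $A$ and $q$ (depending on $\lambda$ and $x$) so that every $n \in \CS_{<x}^{q,A}$ automatically satisfies $\Delta(n) < \lambda \Log_2 x$, and then control the complementary set $\CS_{<x} \setminus \CS_{<x}^{q,A}$ by combining Propositions \ref{gauss} and \ref{iterative}. Specifically, I would take
\[
A \coloneqq c_0 \frac{\lambda}{(\Log \lambda)^{3/4}}, \qquad q \coloneqq \lceil 2 \Log_2 x \rceil,
\]
for a sufficiently small absolute constant $c_0 > 0$. The boundary regimes where $A < 1$ or $\lambda = O(1)$ will fall to the trivial bound $\sum_{n \in \CS_{<x}} 1/n \ll \Log x$, which is acceptable since $(\Log \lambda)^{3/4}/\lambda \gg 1$ in that range.

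The key new ingredient I need is the pointwise moment bound
\[
\Delta(n)^q \le 2^q M_q(n) \qquad (q \ge 1).
\]
To establish this I would fix $u_0$ with $\Delta(n;u_0) = \Delta(n)$ and observe that, for each $u \in (u_0, u_0 + 1)$, every one of the $\Delta(n)$ divisors $d$ with $\log d \in (u_0, u_0+1]$ lies in exactly one of $(u-1, u]$ and $(u, u+1]$, so $\Delta(n; u-1) + \Delta(n; u) \ge \Delta(n)$. Raising this to the $q$-th power, integrating over $u \in (u_0, u_0+1)$, and applying the triangle inequality in $L^q$ (exactly as in the step leading to \eqref{eq:MT ineq}) gives the claim.

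With this inequality in hand, for any $n \in \CS_{<x}^{q,A}$ the definitions \eqref{matn} and \eqref{mjn} (the latter at $y=x$) yield $M_q(n) \le m_{q,A}\tau(n) \le m_{q,A} A \Log x$. Combined with Lemma \ref{recurse}(iii) and the fact that $(\Log x)^{1/q} = O(1)$ by the choice of $q$, this produces
\[
\Delta(n) \le 2 (m_{q,A} A \Log x)^{1/q} \ll q A (\Log A)^{3/4} \ll \Log_2 x \cdot A (\Log A)^{3/4} \le C_1 c_0 \lambda \Log_2 x
\]
for some absolute constant $C_1$, where in the last step I use $A \le \lambda$ (so $\Log A \le \Log \lambda$). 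Choosing $c_0 < 1/C_1$ then forces $\Delta(n) < \lambda \Log_2 x$ throughout $\CS_{<x}^{q,A}$, which reduces the problem to bounding the $1/n$-mass of $\CS_{<x} \setminus \CS_{<x}^{q,A}$.

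For that complement I would decompose
\[
\CS_{<x} \setminus \CS_{<x}^{q,A} = \bigl(\CS_{<x} \setminus \CS_{<x}^A\bigr) \cup \bigcup_{j=1}^{q-1} \bigl(\CS_{<x}^{j,A} \setminus \CS_{<x}^{j+1,A}\bigr).
\]
Proposition \ref{gauss} handles the first piece with a contribution $\ll \Log x / A$. On the $j$-th difference, the defining failure $M_{j+1}(n)/\tau(n) > m_{j+1,A}$, together with Markov's inequality and Proposition \ref{iterative} applied at moment $j+1$, yields a contribution $\ll \Log x/((j+1)^2 A)$. Summing over $j$ produces a total of $\ll \Log x / A = (\Log \lambda)^{3/4} \Log x/(c_0 \lambda)$, which matches the target. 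The main technical obstacle is the careful calibration of $c_0$ and the verification that $A (\Log A)^{3/4} \le c_0 \lambda$ when $A = c_0 \lambda/(\Log \lambda)^{3/4}$; this is precisely where the exponent $3/4$ in the final estimate originates.
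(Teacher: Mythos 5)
Your proposal is correct and follows essentially the same route as the paper: decompose $\CS_{<x}\setminus\CS_{<x}^{q,A}$ using Propositions \ref{gauss} and \ref{iterative} together with Markov's inequality, and use $\Delta(n)^q\le 2^q M_q(n)$ with Lemma \ref{recurse}(iii) and $q\asymp\Log_2 x$ to force $\Delta(n)<\lambda\Log_2 x$ on $\CS_{<x}^{q,A}$. The only differences are cosmetic: the paper defines $A$ implicitly by $\lambda=C_1A(\Log A)^{3/4}$ and takes $q=\lfloor\Log_2 x\rfloor$, whereas you give $A$ explicitly and take $q=\lceil 2\Log_2 x\rceil$; and the paper cites (and re-proves in a footnote) the bound $\Delta(n)^q\le 2^q M_q(n)$, which you establish by the same two-window argument.
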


\begin{proof}  Let $C_1$ be a large constant and define $A>0$ implicitly via the equation 
	\[
	\lambda = C_1 A (\Log A)^{3/4}.
	\]
We may assume that $A \geq 1$, as the estimate is trivial otherwise. 
Our task is now to show that
\[
\sum_{\substack{ n \in \CS_{<x} \\ \Delta(n) \geq \lambda\Log_2x}} 
	\frac{1}{n} \ll \frac{\Log x}{A}.
\]
From Proposition \ref{gauss} and relation \eqref{eq:S when q=1}, we have
\begin{equation}
	\label{eq:gauss again}
		\sum_{n \in \CS_{<x} \backslash \CS_{<x}^{1,A}} \frac{1}{n} \ll \frac{\Log x}{A}.
\end{equation}
Also, from \eqref{matn}, Proposition \ref{iterative}, and Markov's inequality, we have for all $j \geq 2$ that
\begin{equation}
	\label{eq:S_{j-1}-S_j}
\sum_{n \in \CS^{j-1,A}_{<x} \backslash \CS_{<x}^{j,A}} \frac{1}{n} 
		\leq \frac{1}{m_{j,A}} \sum_{n \in \CS^{j-1,A}_{<x}} \frac{M_j(n)/\tau(n)}{n} 
		\ll \frac{\Log x}{j^2 A}.
\end{equation}
Summing \eqref{eq:gauss again} and \eqref{eq:S_{j-1}-S_j} for $j=2,\dots,q$, we conclude that
\[
\sum_{n \in \CS_{<x} \backslash \CS_{<x}^{q,A}} \frac{1}{n} \ll \frac{\Log x}{A} 
\qquad\text{for all}\ q\in\N.
\]
The corollary will then follow if we can show that there exists $q\in\N$ such that
\begin{equation}
\label{eq:Delta control}
\Delta(n)<\lambda\Log_2x \quad\text{for all}\ n\in\CS_{<x}^{q,A}.
\end{equation}

Indeed, let us fix $q\in\N$ to be chosen later and let $n \in \CS_{<x}^{q,A}$. From Theorem 72 in \cite{HT-book}, we know that\footnote{For completeness, we give the short proof of this inequality. We have $\Delta(n) = \Delta(n;u_0)$ for some real $u_0$, hence $\Delta(n)^q \leq (\Delta(n;u) + \Delta(n;u+1))^q \leq 2^{q-1} ( \Delta(n;u)^q + \Delta(n;u+1)^q )$
	for all $u\in[u_0 - 1,u_0]$. Integrating both sides over $u\in[u_0-1,u_0]$ yields the inequality $\Delta(n)^q \leq 2^q M_q(n)$.}
\[
\Delta(n)^q \leq 2^q M_q(n).
\]
Hence, by \eqref{matn} and \eqref{mjn}, we have 
\[
\Delta(n)^q \ll 2^q A m_{q,A} \Log x.
\]
Taking $q^{\mathrm{th}}$ roots and using Lemma \ref{recurse}(iii), we find that
\[
\Delta(n) \ll q A (\Log A)^{3/4} (\Log x)^{1/q} .
\]
We take $q \coloneqq \lfloor \Log_2 x\rfloor$ to optimize constants. Recalling the definition of $A$ in terms of $\lambda$, and assuming the constant $C_1$ there is chosen to be large enough, we conclude that \eqref{eq:Delta control} does hold for all $n\in\CS_{<x}^{q,A}$. This completes the proof of the corollary.
\end{proof}

\begin{corollary}[Strong type estimate]\label{strong} For any $x \geq 1$, we have
\[
\sum_{n \in \CS_{<x}} \frac{\Delta(n)}{n} \ll (\Log_2 x)^{11/4} \Log x.
\]
\end{corollary}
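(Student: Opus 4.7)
The plan is a standard weak-type to strong-type deduction via the layer cake, combined with a crude tail estimate. Specifically, writing
\[
\sum_{n \in \CS_{<x}} \frac{\Delta(n)}{n} = \int_0^\infty F(t) \, \dee t
\qquad\text{with}\qquad
F(t) \coloneqq \sum_{\substack{n \in \CS_{<x} \\ \Delta(n) > t}} \frac{1}{n},
\]
I would split the integration at a threshold $T \coloneqq (\Log x)^{C_2}$ for a sufficiently large absolute constant $C_2$. The trivial range $t \in [0, \Log_2 x]$ contributes $\ll \Log_2 x \cdot \Log x$, via $F(t) \le \sum_{n \in \CS_{<x}} 1/n \ll \Log x$.

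For $t \in [\Log_2 x, T]$, the preceding weak-type corollary (applied with $\lambda = t/\Log_2 x \ge 1$) gives $F(t) \ll \Log_2 x \cdot \Log x \cdot (\Log(t/\Log_2 x))^{3/4}/t$, and the successive substitutions $s = t/\Log_2 x$ and $u = \log s$ yield
\[
\int_{\Log_2 x}^T F(t) \, \dee t \ll \Log_2 x \cdot \Log x \int_0^{\log(T/\Log_2 x)} \max(1,u)^{3/4} \, \dee u \ll \Log_2 x \cdot \Log x \cdot \bigl(\log(T/\Log_2 x)\bigr)^{7/4}.
\]
With our choice of $T$, one has $\log(T/\Log_2 x) = C_2 \Log_2 x - \Log_3 x \asymp \Log_2 x$, so this contribution is $\ll (\Log_2 x)^{11/4} \Log x$, matching the target exponent.

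For the tail $t > T$, I would use the pointwise bound $\Delta(n) \le \tau(n) = 2^{\omega(n)}$ (valid because $n \in \CS_{<x}$ is squarefree) to estimate
\[
\int_T^\infty F(t) \, \dee t \le \sum_{\substack{n \in \CS_{<x} \\ \tau(n) > T}} \frac{\tau(n)}{n} \le \sum_{k > (\log T)/\log 2} \frac{(2\Log_2 x + O(1))^k}{k!},
\]
where I have used the standard Mertens-type bound $\sum_{\omega(n)=k,\, n \in \CS_{<x}} 1/n \ll (\Log_2 x + O(1))^k/k!$ (as already invoked in the proof of Proposition \ref{gauss}). Choosing $C_2$ large enough that $(\log T)/\log 2 \ge 10\,\Log_2 x$, Stirling's formula makes this Poisson-type tail decay super-exponentially in $\Log_2 x$, so this contribution is negligible.

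The essential obstacle is that the weak-type integral $\int_{\Log_2 x}^\infty (\Log(t/\Log_2 x))^{3/4}/t \, \dee t$ is logarithmically divergent, which forces us to truncate. The threshold $T = (\Log x)^{O(1)}$ is the right sweet spot: it is large enough that the crude pointwise bound $\Delta \le \tau$ controls the tail (the $1/n$-measure concentrates on $n$ with $\tau(n) \le (\Log x)^{\log 2 + o(1)}$, well below $T$), yet small enough that the weak-type integral still produces exactly $(\Log_2 x)^{7/4}$ rather than some larger power. No further input beyond the preceding weak-type estimate and Mertens' theorem is required.
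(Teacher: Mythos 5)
Your argument is correct and mirrors the paper's: both deduce the strong-type bound from the weak-type corollary by splitting at $\Log_2 x$ and at a fixed power of $\Log x$, the paper via a dyadic decomposition over $2^j\Log_2 x \le \Delta(n) < 2^{j+1}\Log_2 x$ and you via the equivalent layer-cake integral, with the middle range producing the exponent $1 + 3/4 + 1 = 11/4$ in the same way. The only cosmetic difference is in the tail: the paper notes that $\Delta(n) \ge (\Log x)^{10}$ forces $\Delta(n) \le \tau(n)^2/(\Log x)^{10}$ and applies Lemma \ref{mertens} with $k=2$, while you use a Poisson-type tail bound on $\omega(n)$ via Stirling --- both are short Mertens applications.
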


\begin{proof}  For those $n$ with $\Delta(n) \geq (\Log x)^{10}$, we use the trivial bound $\Delta(n) \leq \tau(n)^2/(\Log x)^{10}$, and this contribution is acceptable by Lemma \ref{mertens}. 

On the other hand, those $n$ with $\Delta(n)\le \Log_2x$ also have an acceptable contribution because $11/4>1$. 
	
We then subdivide the remaining range $\Log_2x \leq \Delta(n) < (\Log x)^{10}$ into $O(\Log_2 x)$ dyadic ranges $2^j \Log_2x \leq \Delta(n) < 2^{j+1}\Log_2x $ with $j\in\Z_{\ge0}$. In each range we use Corollary \ref{strong}. Thus
\begin{align*}
		\sum_{\substack{n \in \CS_{<x} \\ \Log_2x\le\Delta(n)<(\Log x)^{10}}} \frac{\Delta(n)}{n}
		&\le \sum_{0\le j\ll\Log_2x} \sum_{\substack{n \in \CS_{<x} \\ 2^j \le \Delta(n)/\Log_2x<2^{j+1}}}  \frac{\Delta(n)}{n} \\
		&\le \sum_{0\le j\ll\Log_2x} (2^{j+1}\Log_2x) \sum_{\substack{n \in \CS_{<x} \\ \Delta(n)\ge 2^j\Log_2x}} \frac{1}{n} \\
		&\ll \sum_{0\le j\ll\Log_2x} (2^{j+1}\Log_2x) \cdot  \frac{j^{3/4}}{2^j} \Log x \ll (\Log_2x)^{11/4}\Log x.
\end{align*}
This completes the proof.
\end{proof}

Lastly, Theorem \ref{main} follows immediately by Corollary \ref{strong} and inequality \eqref{eq:switching to log weights}.

\section{Proof of \eqref{eq:robert improvement}}
\label{sec:robert improvement} 

Fix $k,c_1,\dots,c_k,\ell_1,\dots,\ell_k$ as in Remark \ref{rem:diophantine}. All implied constants might depend on these parameters without further notice. 

Following the proof of Theorem 1.1 in Section 5 of \cite{robert}, we have
\begin{equation}
	\label{eq:robert e0}
		S^{\neq}(x) \ll x+ \frac{x}{\log x} (\Log_2x)^{2+2^{4L}} \sum_{p|m\ \Rightarrow\ p<y} \frac{\Delta(m)f(m)}{m}
\end{equation}
with $y=\exp(c\frac{\Log x}{\Log_2x})$ for some constant $c>0$ and $f(m)=N(\underline{\ell};\underline{c};m)/(m^{2k-2}\phi(m))$, where  $\phi(m)=\#(\Z/m\Z)^*$ is Euler's totient function and $N(\underline{\ell};\underline{c};m)$ is defined to be the number of tuples $(m_1,\dots,m_k,n_1,\dots,n_k)\in(\Z/m\Z)^{2k}$ such that $\sum_{j=1}^k c_jm_j^{\ell_j}\equiv \sum_{j=1}^k c_jn_j^{\ell_j}\mod m$. 

Now, in view of \cite[Lemma 3.4]{robert} and our assumption that\footnote{When $k=1$, we have $f(p)=2+O(1/p)$, and the behaviour of $\sum_{p|m\ \Rightarrow p<y} f(m)\Delta(m)/m$ changes. Indeed, the case $k=1$ of \eqref{eq:robert diophantine equation} corresponds to the classical problem of which integers $n$ can be written in the form $c_0m_0^2+c_1m_1^2$. In particular, a correction is needed in \cite[Theorem 1.1]{robert} to indicate that $k$ must be at least $2$.} $k\ge2$, we have $f(p)=1+O(1/p)$ and $f(p^\nu)\le \nu^{O(1)}$ for $\nu\ge2$. Therefore
\begin{align}
		\sum_{p|m\ \Rightarrow\ p<y} \frac{\Delta(m)f(m)}{m}
			&\ll \sum_{m\in \CS_{<y}}  \frac{\Delta(m)f(m)}{m}  \label{eq:robert e1}\\
			&\ll  \sum_{m\in \CS_{<y}}  \frac{\Delta(m)}{m}  \label{eq:robert e2}\\
			&\ll (\Log y)(\Log_2y)^{11/4} \asymp (\Log x)(\Log_2x)^{7/4}, \label{eq:robert e3}
\end{align}
where \eqref{eq:robert e1} is proven by writing $m=m_1m_2$ with $m_1$ square-free, $m_2$ square-full and $(m_1,m_2)=1$, so that $\Delta(m)\le \Delta(m_1)\tau(m_2)$, \eqref{eq:robert e2} is proven by writing $f=1*g$ so that $f(m)\Delta(m)\le \sum_{ab=m}\Delta(a)|g(b)|\tau(b)$ for $m$ square-free (because we must then have $(a,b)=1$ whenever $m=ab$, and thus $\Delta(m)\le \Delta(a)\tau(b)$), and \eqref{eq:robert e3} follows by Corollary \ref{strong} and the definition of $y$.

 Combining \eqref{eq:robert e0} and \eqref{eq:robert e3} completes the proof of \eqref{eq:robert improvement}.

\end{document}